\newcommand\F{{\mathbb F}}
\newcommand\Z{{\mathbb Z}}
\newcommand\A{{\mathbb A}}
\newcommand\N{{\mathbb N}}
\newtheorem{theorem}{Theorem}[section]
\newtheorem{lemma}[theorem]{Lemma}
\newtheorem{corollary}[theorem]{Corollary}
\newtheorem{proposition}[theorem]{Proposition}
\theoremstyle{definition}
\newtheorem{definition}[theorem]{Definition}
\numberwithin{equation}{section}
\newcommand\sign{\text{\rm sign}}
\newcommand\cP{{\mathcal P}}
\newcommand\cT{{\mathcal T}}
\begin{document}

\title{Polynomial analogue of the Smarandache function}

\author{Xiumei Li}

\address{School of Mathematical Sciences, Qufu Normal University, Qufu, 273165, China}
\email{lxiumei2013@qfnu.edu.cn}

\author{Min Sha}
\address{School of Mathematics and Statistics, University of New South Wales, Sydney, NSW 2052, Australia}
\email{shamin2010@gmail.com}

\keywords{Erd{\H o}s's problem, factorial, Smarandache function, polynomials over finite fields}

\subjclass[2010]{11T06, 11T55}




\begin{abstract}
In the integer case, the Smarandache function of a positive integer $n$ is defined to be
the smallest positive integer $k$ such that $n$ divides the factorial $k!$.
In this paper, we first define a natural order for polynomials in $\F_q[t]$ over a finite field $\F_q$
and then define the Smarandache function of a non-zero polynomial $f \in \F_q[t]$, denoted by $S(f)$,
  to be the smallest polynomial $g$ such that $f$ divides the Carlitz factorial of $g$.
In particular, we establish an analogue of a problem of Erd{\H o}s, which implies that
for almost all polynomials $f$, $S(f)=t^d$, where $d$ is the maximal degree of the irreducible factors of $f$.
\end{abstract}

\maketitle

\section{Introduction}

\subsection{Motivation}

In number theory, the Smarandache function of a positive integer $n$
is defined to be the smallest positive integer $k$ such that $n$ divides the factorial $k!$.
This function was studied by Lucas \cite{Lucas} for powers of primes
and then by Neuberg \cite{Neu} and Kempner \cite{Kem} for general $n$.
In particular, Kempner \cite{Kem} gave the first correct algorithm for computing this function.
In 1980 Smarandache \cite{Sma} rediscovered this function.
It is also sometimes called the Kempner function.
This function arises here and there in number theory
(for instance, see \cite{Chen1995, Kempner1921,Luca,Sondow}).
Please see \cite{Liu} for a survey on recent results and \cite{HS} for a generalization to several variables.

Clearly, the Smarandache function of $n$ is equal to the maximum of those of its prime power factors.
For any integer $n \ge 2$, let $P(n)$ be the largest prime factor of $n$; and put $P(1)=1$.
For any $x > 1$, denote by $N(x)$ the number of
positive integers $n \le x$, whose Smarandache function is not equal to that of $P(n)$ (that is, $P(n)$, this means $n \nmid P(n)!$).
In 1991 Erd{\H o}s \cite{Erdos} posed a problem answered by Kastanas \cite{Kas} in 1994 that
 $N(x)=o(x)$ when $x$ goes to infinity.
 Later, Akbik \cite{Akbik} proved that $N(x) = O(x\exp(-\frac{1}{4}\sqrt{\log x}))$,
 and recently Ivi{\' c} \cite{Ivic} showed that
 $$
 N(x) = x \exp \left(-\sqrt{2\log x \log\log x} (1+ O(\log\log\log x /\log\log x)) \right);
 $$
 see \cite{DD,Ford} for some other previous results.
 Besides, Ivi{\' c} \cite{Ivic} pointed out that the main result in \cite[Equation (1.3)]{DD} is not correct.

In this paper, we define and study the Smarandache function for polynomials over a finite field.
In particular, we establish an analogue of Erd{\H o}s's problem.

\subsection{Our consideration}

Let $\F_q$ be the finite field of $q$ elements, where $q$ is a power of a prime $p$.
Denote by $\A=\F_q[t]$ the polynomial ring of one variable over $\F_q$ and $\N$ the set
of non-negative integers. Let $\N^*$ be the set of positive integers.
For any non-zero $g \in \A$, we denote by $\sign(g)$ the leading coefficient of $g$
(which is also called the sign of $g$).

We write $\F_q=\{a_{0}=0,a_{1}=1, a_2, \ldots,a_{q-1}\}$ throughout the paper.
For any non-zero polynomial $f\in \A$ of degree $n$, $f$ can be uniquely written as
\begin{equation}\label{eq:f}
f=a_{i_0}+a_{i_1}t+\ldots+a_{i_n}t^{n},  \quad a_{i_n} \ne 0, \, 0\leq i_j \leq q-1.
\end{equation}
We define $\delta(f)$ to be the integer:
\begin{equation*}\label{eq:delta}
\delta(f) = i_0 + i_1q + \cdots + i_n q^n.
\end{equation*}
In addition, we put $\delta(0)=0$.

Clearly, $\delta$ is a bijective map from $\A$ to $\N$, and for any $m\in\N$,
$$
\delta^{-1}(m)=a_{i_0}+a_{i_1}t+\ldots+a_{i_k}t^{k},
$$
 where $i_0 + i_1q + \cdots + i_k q^k$ is the $q$-adic expansion of $m$.

Moreover, we define an order in $\A$ based on the map $\delta$: for any $f,g\in \A$,
$$
f>g \quad \mbox{if and only if}\quad \delta(f) > \delta(g);
$$
and then $f \ge g$ if and only if $f > g$ or $f=g$.

With these preparations, we define a factorial in $\A$.

\begin{definition}\label{def:factorial}
For any non-zero polynomial $f \in \A$, the factorial of $f$ is defined to be
$$
f!=\prod_{g<f}(f-g).
$$
Additionally, we put $0!=1$.
\end{definition}

By definition, for any integer $n \ge 1$, $t^n !$ is in fact the product of all the monic polynomials of degree $n$.

This factorial is an analogue of the factorial of the rational integers;
see \cite{LS1} for another analogue.
It has been used in \cite{LS2,LS3}.
Notice that the above factorial of $f$ is equal to the multiplication of the Carlitz factorial of $f$ by a constant
(see the comment below Lemma~\ref{lem:factorial}).
For the Carlitz factorial, one can refer to \cite{Carlitz,Thakur}.

We now can define the Smarandache function for polynomials in $\A$.
In fact it has been used in \cite[Section 4.2]{LS2} for counting polynomial functions in the residue class rings
(see the definition of $\lambda$ there).

\begin{definition} \label{def:kem}
Given a non-zero polynomial $f \in \A$, the Smarandache function $S(f)$ of $f$ is defined to be the smallest polynomial $g$ such that
$$
f \mid g!.
$$
We put $S(0)=0$ by convention.
\end{definition}

In Section~\ref{sec:basic} we establish various basic properties of the Smarandache function $S$,
such as the computation, the value set, the inverse images, and fixed points.
We emphasize that several of them haven't been considered in the integer case,
such as Proposition~\ref{prop:delta-Kf} on how the size of a polynomial changes after an action of $S$
and Proposition~\ref{prop:dist} on the distance to fixed points.
We then in Section~\ref{sec:Erdos}  establish an analogue of Erd{\H o}s's problem for the function $S$ (see Theorem~\ref{thm:Erdos}).

\section{Preliminaries}

In this section, we gather some results which are used later on.

\subsection{Some elementary results}

We first compute the factorial $f!$ for any $f\in \A$.

\begin{lemma}\label{lem:factorial}
For any $f\in \A$ of the form \eqref{eq:f}, we have
\begin{equation} \label{eq:factorial}
f!=\left(\prod_{j=0}^{n}a_{i_j}! \right) \prod_{j=1}^{n}\prod_{\substack{h \in\A, \, \deg h=j \\ \sign(h)=1}} h^{i_j}.
\end{equation}
\end{lemma}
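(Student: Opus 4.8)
The plan is to argue by induction on the degree $n$ of $f$, peeling off the top coefficient. The base case $n=0$ is immediate: here $f=a_{i_0}$ is a non-zero constant, the polynomials $g<f$ are exactly the constants $a_0,a_1,\ldots,a_{i_0-1}$ (those with $\delta$-value $0,\ldots,i_0-1$), and so $f!=\prod_{l=0}^{i_0-1}(a_{i_0}-a_l)=a_{i_0}!$ by Definition~\ref{def:factorial}, which is \eqref{eq:factorial} with the second (empty) product equal to $1$.

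For the inductive step I would write $f=a_{i_n}t^{n}+f_1$ with $\deg f_1<n$, so that $\delta(f)=i_nq^{n}+\delta(f_1)$ with $\delta(f_1)<q^{n}$. The key combinatorial step is to sort the polynomials $g<f$ by the coefficient $a_c$ of $t^{n}$ in $g$, writing $g=a_ct^{n}+g_1$ with $\deg g_1<n$. Comparing $q$-adic expansions shows that for each $c\in\{0,\ldots,i_n-1\}$ every $g_1$ of degree $<n$ occurs, while for $c=i_n$ exactly those $g_1<f_1$ occur, and $c>i_n$ is impossible. This partitions the product defining $f!$ into the block $c=i_n$, which contributes $\prod_{g_1<f_1}(f-a_{i_n}t^{n}-g_1)=\prod_{g_1<f_1}(f_1-g_1)=f_1!$, together with the blocks $c<i_n$.

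For a block with $c<i_n$ I set $\lambda_c=a_{i_n}-a_c\in\F_q$, a non-zero constant, so that its factor is $\prod_{\deg g_1<n}(\lambda_ct^{n}+f_1-g_1)$. Since $g_1\mapsto f_1-g_1$ is a bijection of the set of polynomials of degree $<n$, this equals $\prod_{\deg h_1<n}(\lambda_ct^{n}+h_1)$, i.e. the product of all degree-$n$ polynomials with leading coefficient $\lambda_c$. Writing each such polynomial as $\lambda_c$ times a monic one, this product is $\lambda_c^{\,q^{n}}\cdot(t^{n}!)$, where $t^{n}!$ denotes the product of all monic polynomials of degree $n$ (as noted after Definition~\ref{def:factorial}). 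The point that makes everything collapse is the finite-field identity $\lambda_c^{\,q^{n}}=\lambda_c$ for $\lambda_c\in\F_q$, so each such block is simply $\lambda_c\cdot(t^{n}!)$.

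Multiplying the blocks gives $f!=f_1!\cdot(t^{n}!)^{i_n}\prod_{c=0}^{i_n-1}\lambda_c$, and $\prod_{c=0}^{i_n-1}(a_{i_n}-a_c)=a_{i_n}!$ by definition; substituting the inductive formula for $f_1!$ then yields \eqref{eq:factorial}, since $\prod_{j=1}^{n}(t^{j}!)^{i_j}$ is exactly the double product in \eqref{eq:factorial}. The hard part will be purely bookkeeping: setting up the partition of $\{g<f\}$ by the leading digit correctly via the $q$-adic comparison, and tracking the leading-coefficient constants carefully so that the stray exponent $q^{n}$ is seen to disappear through $\lambda^{q^{n}}=\lambda$ and the remaining constants assemble precisely into the factor $a_{i_n}!$ demanded by the formula.
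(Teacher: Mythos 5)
Your proof is correct, but it follows a genuinely different route from the paper's. The paper gives a one-shot bijection argument: it rewrites the right-hand side of \eqref{eq:factorial} as
$\prod_{j=0}^{n}\prod_{k=0}^{i_j-1}\prod_{\substack{h\in\A,\,\deg h=j,\ \sign(h)=a_{i_j}-a_k}} h$
(a rewriting that silently uses the same identity $c^{q^j}=c$ for $c\in\F_q$ that you invoke) and then observes that $h\mapsto f-h$ is a bijection from this index set onto $\{g\in\A:\,g<f\}$, so the whole product equals $\prod_{g<f}(f-g)=f!$ by definition. Your induction on $\deg f$ is essentially the recursive unwinding of that bijection: your blocks with $c<i_n$ are exactly the paper's $j=n$ terms, and your $c=i_n$ block, handled by the induction hypothesis, accounts for all the terms with $j<n$. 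What your version buys is explicitness --- the $q$-adic comparison behind the partition of $\{g<f\}$ and the absorption of the leading constants via $\lambda_c^{q^n}=\lambda_c$ are spelled out rather than asserted --- at the cost of length and one piece of bookkeeping you should make explicit when writing it up: the induction hypothesis is applied to $f_1$, whose leading digits may vanish (indeed $f_1$ may be $0$), so you need the observation that padding the expansion of $f_1$ with zero digits up to degree $n-1$ does not change the right-hand side of \eqref{eq:factorial}, because $a_0!=0!=1$ and $h^{0}=1$. With that remark added, your argument is complete.
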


\begin{proof}
Denote by $R$ the right hand side of \eqref{eq:factorial}.
We rewrite $R$ as
$$
R=\prod_{j=0}^{n}\prod_{k=0}^{i_{j}-1}\prod_{ \substack{h\in\A, \, \deg h =j \\ \sign(h)=a_{i_j}-a_{k}}} h
= \prod_{j=0}^{n}\prod_{k=0}^{i_{j}-1}\prod_{ \substack{h\in\A, \, \deg h =j \\ \sign(h)=a_{i_j}-a_{k}}}(f-(f-h)),
$$
where one can see that $f-h$ exactly runs over all the polynomials $g < f$.
So, by definition we have $R=f!$.
\end{proof}

By definition and Lemma~\ref{lem:factorial}, $f!/ (\prod_{j=0}^{n}a_{i_j}!)$ is exactly the Carlitz factorial of $f$.
So, in Definition~\ref{def:kem} we can replace $g!$ by the Carlitz factorial of $g$.

The following result is a special case of Example 3 in \cite{Bhargava1997}.
We give a proof here.

\begin{lemma}\label{lem:val}
Let  $P \in \A$ be an irreducible polynomial of degree $d \ge 1$.
Then, for any non-zero polynomial $f\in \A$ we have
 \begin{equation*}
v_P(f!)=\sum_{j\geq 1}\left\lfloor\frac{\delta(f)}{q^{dj}}\right\rfloor,
\end{equation*}
where $v_P$ is the usual $P$-adic valuation.
\end{lemma}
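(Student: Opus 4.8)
The plan is to reduce the $P$-adic valuation of $f!$ to a purely combinatorial count via the explicit product formula of Lemma~\ref{lem:factorial}, and then to match the resulting double sum against the claimed sum of floor functions by a base-$q$ computation, mirroring the classical proof of Legendre's formula for $v_p(n!)$.

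First I would apply Lemma~\ref{lem:factorial}. The leading factor $\prod_{j=0}^{n}a_{i_j}!$ is a product of factorials of constants, hence a nonzero element of $\F_q$, and so contributes nothing to $v_P$ when $\deg P \ge 1$. Since $v_P$ depends only on the ideal $(P)$, I may assume $P$ is monic. This leaves
$$
v_P(f!) = \sum_{j=1}^{n} i_j\, N_j, \qquad N_j := \sum_{\substack{h\in\A,\ \deg h=j\\ \sign(h)=1}} v_P(h).
$$

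Next I would compute $N_j$ by interchanging the order of summation. Writing $v_P(h)=\#\{l\ge 1 : P^l \mid h\}$, I count, for each $l$, the monic polynomials $h$ of degree $j$ divisible by $P^l$; since $P$ is monic of degree $d$, these are exactly the products $P^l m$ with $m$ monic of degree $j-dl$, of which there are $q^{\,j-dl}$ whenever $dl\le j$ and none otherwise. Hence $N_j=\sum_{1\le l\le \lfloor j/d\rfloor} q^{\,j-dl}$, giving
$$
v_P(f!)=\sum_{j=1}^{n} i_j \sum_{l=1}^{\lfloor j/d\rfloor} q^{\,j-dl}.
$$
For the right-hand side I would use $\delta(f)=\sum_{k=0}^{n} i_k q^{k}$ with $0\le i_k\le q-1$. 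The digit bound yields $\sum_{k<dm} i_k q^{k}\le q^{dm}-1<q^{dm}$, so the fractional part vanishes and $\lfloor \delta(f)/q^{dm}\rfloor=\sum_{k\ge dm} i_k q^{\,k-dm}$. Summing over $m\ge 1$ and interchanging summation (for each $k$ the index $m$ ranges over $1\le m\le \lfloor k/d\rfloor$) reproduces the double sum above, establishing the identity.

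The proof is essentially a sequence of reductions, so no single step is deep; the genuine content is recognizing that the constant factor $\prod_j a_{i_j}!$ is $P$-adically trivial and that $N_j$ is precisely the finite-field analogue of the Legendre count. I expect the only delicate point to be the floor evaluation $\lfloor \delta(f)/q^{dm}\rfloor=\sum_{k\ge dm} i_k q^{\,k-dm}$, which relies crucially on the digit inequalities $0\le i_k\le q-1$ to discard the fractional tail; care must also be taken in interchanging the two summations so that the ranges of $l$ (respectively $m$) are matched correctly against those for $N_j$.
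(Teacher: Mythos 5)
Your proof is correct and follows essentially the same route as the paper: both start from the product formula of Lemma~\ref{lem:factorial}, use that there are exactly $q^{m-dl}$ monic polynomials of degree $m$ divisible by $P^l$, and identify the resulting count with $\lfloor \delta(f)/q^{dj}\rfloor$ via the base-$q$ digit bound. The only difference is organizational—you sum valuations term by term and then match against the expanded floors, whereas the paper counts, for each $j$, the terms divisible by $P^j$ directly—but this is just the two orders of the same double count.
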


\begin{proof}
Assume that $f$ is of the form \eqref{eq:f}.
From the formula \eqref{eq:factorial} of $f!$, we see that
for any integer $j \ge 1$, if $n= \deg f \ge dj$, then the number of terms in the right hand side of \eqref{eq:factorial}
divisible by $P^j$ is exactly equal to
$$
i_{dj} + i_{dj+1}q + \cdots + i_n q^{n-dj}.
$$
Summing up all these estimates we obtain the desired formula.
\end{proof}

Clearly, Lemma~\ref{lem:val} gives the following result.

\begin{corollary}\label{cor:div}
 For any $f,g\in \A$, if $g \le f$, then $ g! \mid f!$.
\end{corollary}

We remark that in Corollary~\ref{cor:div} the converse is not true.

For the proof of Proposition~\ref{prop:comp2}, we need the following lemma,
which is in fact a simple generalization of the formula of $\alpha$ in \cite[page 207]{Kem}
(also the formula in \cite[Lemma 1]{Sma}).

\begin{lemma}\label{lem:rep}
  Fix a positive integer $n>1$, and define a sequence $\{b_{j}=\frac{n^{j}-1}{n-1}: \, j\in\N^*\}$.
   Then for any $e\in\N^*$,  $e$ can be uniquely represented as
 \begin{equation*}\label{eq:rep}
e=c_{1}b_{j_1}+c_{2}b_{j_2}+\cdots+c_{k}b_{j_k},
\end{equation*}
where
$ j_1>j_2>\cdots>j_k>0 $
and $1\leq c_i < n$ for $i=1,2,\ldots,k-1, 1\leq c_k\leq n$.
\end{lemma}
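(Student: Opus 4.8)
The plan is to base everything on the single recurrence
\[
b_{j+1}=nb_j+1 \qquad (j\in\N^*),
\]
which is immediate from $b_j=(n^j-1)/(n-1)$. From it one reads off that the finite intervals $[b_m,\,nb_m]=[b_m,\,b_{m+1}-1]$ for $m=1,2,\dots$ are pairwise disjoint, consecutive, and (since $b_1=1$) exhaust $\N^*$. Hence each $e\in\N^*$ lies in exactly one of them, and I would let $m$ be the unique index with $b_m\le e\le nb_m$; this $m$ is the value I will show must equal $j_1$.

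For existence I would proceed by strong induction on $e$, using a greedy step. Put $j_1=m$ and $c_1=\lfloor e/b_m\rfloor$, so that $1\le c_1\le n$ and the remainder $r=e-c_1b_m$ satisfies $0\le r<b_m$. If $r=0$, the single term $e=c_1b_m$ already gives an admissible representation (with $k=1$, where the weaker bound $c_k\le n$ is what is needed). If $r>0$, I first observe that $c_1\le n-1$: indeed $c_1=n$ would force $e=nb_m$ and hence $r=0$. Since $1\le r<b_m\le e$, the inductive hypothesis applies to $r$ and yields an admissible representation whose top index is $<m$; prepending the term $c_1b_m$ then gives the desired representation of $e$.

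For uniqueness the crux is to show that in any admissible representation $e=\sum_{i=1}^k c_ib_{j_i}$ the leading data $j_1$ and $c_1$ are forced. I would prove by induction on the top index that the tail $\tau=\sum_{i\ge2}c_ib_{j_i}$, which is itself an admissible representation with top index $\le j_1-1$, satisfies $\tau\le nb_{j_1-1}=b_{j_1}-1$; combined with $c_1\ge1$ this yields $b_{j_1}\le e\le nb_{j_1}$, so $j_1$ must coincide with the unique interval index $m$ above, and then $c_1=\lfloor e/b_m\rfloor$ and $\tau=e-c_1b_m$ are determined. Applying the induction hypothesis to $\tau<e$ completes the argument. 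The main obstacle is precisely this tail estimate: one must split into the case $c_1=n$ (where admissibility forces the tail to be empty, giving $e=nb_m$) and the case $c_1\le n-1$ (where the inductive bound gives $e\le(n-1)b_m+(b_m-1)=nb_m-1$), so that the two boundary behaviours of the digit constraint dovetail exactly with the endpoints $b_m$ and $nb_m$ of the interval containing $e$.
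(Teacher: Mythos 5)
Your proof is correct and follows essentially the same route as the paper's: both rest on the recurrence $b_{j+1}=nb_j+1$, the resulting partition of $\N^*$ into the intervals $[b_m,b_{m+1})$, and the greedy division-algorithm step, including the same observation that a nonzero remainder forces $c_1<n$. The only difference is that you spell out uniqueness via the tail bound $\tau\le nb_{j_1-1}=b_{j_1}-1$ (forcing $j_1$ and $c_1$ in any admissible representation), a point the paper leaves implicit in the uniqueness of the interval index and of each division step, so your write-up is if anything the more complete one.
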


\begin{proof}
Obviously, $\N^*$ is the disjoint union of the sets $[b_{j},b_{j+1})\cap \N^*, j\in\N^*$,
and $b_{j+1} = n b_j +1$ for any $j \in \N^*$.
 So, for any $e\in\N^*$, there exists an unique integer $j_{1}\in\N^*$ such that $e\in[b_{j_{1}},b_{j_{1}+1})\cap \N^* $,
 then by the division algorithm, we have
$$
e=c_{1}b_{j_{1}}+r_{1},
$$
 where $1\leq c_{1}=\lfloor\frac{e}{b_{j_{1}}}\rfloor\leq n$ and $0\leq r_{1}<b_{j_{1}}$.
If $r_{1}=0$, as $b_{j_{1}}\leq e <b_{j_{1}+1}$, then $k=1, 1\leq c_{1} \leq n$ and Lemma \ref{lem:rep}
is proved.

If $r_{1}\neq0$, as $b_{j_{1}}\leq e <b_{j_{1}+1}$, then $1\leq c_{1} < n$. The following procedure is the iterative process
 that makes use of the division algorithm in the form:
\begin{align*}
& r_{1}=c_{2}b_{j_{2}}+r_{2}, \quad 1\leq c_{2} < n, \,  0< r_{2}<b_{j_{2}}, \\
& r_{2}=c_{3}b_{j_{3}}+r_{3}, \quad 1\leq c_{3} < n, \, 0< r_{3}<b_{j_{3}}, \\
& \vdots \\
& r_{k-2}=c_{k-1}b_{j_{k-1}}+r_{k-1}, \quad 1\leq c_{k-1} < n, \, 0< r_{k-1}<b_{j_{k-1}}, \\
& r_{k-1}=c_{k}b_{j_{k}}, \quad 1\leq c_{k}\leq n.
\end{align*}
In the above computation the integer $k$ is defined by the condition that $r_{k-1} \neq 0$ and that $r_{k} = 0$.
Since $ e\geq b_{j_{1}}> r_{1}\geq b_{j_{2}}>r_{2}\geq \cdots \geq 0$,  such a $k$ must exist and $j_{1}>j_{2}>\cdots>j_{k}> 0$.

Collecting all the equalities above, Lemma \ref{lem:rep} is proved.
\end{proof}

\subsection{Counting polynomials}
\label{sec:count}

For any non-zero $f \in \A$, let $\omega(f)$ be the number of distinct monic irreducible factors of $f$,
and let $\tau(f)$ be the number of distinct monic factors of $f$.

The following two results should be well-known.

\begin{lemma} \label{lem:count-irre}
For any integer $n \ge 1$, the number of monic irreducible polynomials in $\A$ of degree at most $n$ is at most $q^n$.
\end{lemma}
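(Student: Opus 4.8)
The plan is to reduce the count to the standard estimate on the number of monic irreducible polynomials of each fixed degree. Write $N_q(d)$ for the number of monic irreducible polynomials in $\A$ of degree exactly $d$; since a monic irreducible polynomial has degree at least $1$, the quantity to be bounded is $\sum_{d=1}^{n} N_q(d)$.

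First I would establish the elementary bound $N_q(d) \le q^d/d$ for every $d \ge 1$. The cleanest route is to observe that any monic irreducible polynomial of degree $d$ has $d$ distinct roots lying in $\F_{q^d}$, and that two distinct monic irreducible polynomials of degree $d$ have disjoint root sets, each being the minimal polynomial of any of its roots. Since $\F_{q^d}$ has exactly $q^d$ elements, counting roots gives $d\,N_q(d) \le q^d$. Equivalently, one may use that the product of all monic irreducibles whose degree divides $d$ equals $t^{q^d}-t$, whence $\sum_{e\mid d} e\,N_q(e) = q^d$ and in particular $d\,N_q(d) \le q^d$.

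It then remains to show $\sum_{d=1}^{n} q^d/d \le q^n$. Here I would set $a_d = q^d/d$ and check that the sequence $(a_d)$ is non-decreasing for $q \ge 2$: the ratio $a_d/a_{d-1} = q(d-1)/d$ is at least $1$ precisely when $q(d-1) \ge d$, which holds for all $d \ge 2$ once $q \ge 2$, while $a_2 = q^2/2 \ge q = a_1$ also holds for $q \ge 2$. Consequently every term satisfies $a_d \le a_n = q^n/n$, and summing the $n$ terms yields $\sum_{d=1}^{n} a_d \le n \cdot q^n/n = q^n$, as required.

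The argument is essentially routine; the only point requiring a little care is the monotonicity of $q^d/d$, which is where the hypothesis $q \ge 2$ is genuinely used. It guarantees that the geometric growth of $q^d$ dominates the harmonic decay of $1/d$, so that the whole sum does not exceed its largest term scaled by the number of summands, and the crude term-by-term bound $N_q(d)\le q^d$ (which would only give $\sum_{d=1}^n q^d = (q^{n+1}-q)/(q-1) > q^n$) is replaced by the sharper $N_q(d)\le q^d/d$.
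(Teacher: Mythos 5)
Your proof is correct, but it takes a genuinely different route from the paper's. You bound the count degree by degree via the standard prime-counting estimate $N_q(d) \le q^d/d$ (from root-counting in $\F_{q^d}$, or equivalently from the factorization of $t^{q^d}-t$), and then sum using the monotonicity of $q^d/d$ for $q \ge 2$, so that $\sum_{d=1}^n q^d/d \le n \cdot (q^n/n) = q^n$. The paper instead uses a purely combinatorial injection: each monic irreducible $f$ of degree $d \le n$ is sent to the monic polynomial $t^r f^s$ of degree exactly $n$, where $n = sd + r$ with $0 \le r < d$; unique factorization in $\A$ shows this map is injective, so the count is at most the number of monic polynomials of degree $n$, which is $q^n$. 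The paper's argument is more elementary and self-contained, needing nothing beyond the division algorithm and unique factorization, whereas yours imports the (standard but nontrivial) fact that an irreducible of degree $d$ has $d$ distinct roots in $\F_{q^d}$. In exchange, your approach yields the sharper per-degree information $N_q(d) \le q^d/d$, and it makes visible exactly where $q \ge 2$ is used, namely in the summation step where geometric growth absorbs the factor $n$ (with equality of the intermediate bound at $q=2$, $n=2$). Both arguments are sound; yours is just heavier machinery for the same conclusion.
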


\begin{proof}
For any monic irreducible polynomial $f \in \A=\F_q[t]$, if $f$ is of degree $d \le n$,
then $f$ corresponds to the monic polynomial $t^rf^s$ of degree $n$, where $n = sd + r$ with $0 \le r < d$ by the division algorithm.
Note that this corresponding is injective.
So the desired result follows.
\end{proof}

\begin{lemma} \label{lem:tau}
For any integer $n \ge 1$, we have
$$
\sum_{\substack{\textrm{monic $f \in \A$} \\ \deg f = n}} \tau(f) = (n+1)q^n.
$$
\end{lemma}

\begin{proof}
This result has been recorded in \cite[Proposition 2.5]{Rosen}.
Here we present a different proof.
It is easy to see that
\begin{align*}
\sum_{\substack{\textrm{monic $f \in \A$} \\ \deg f = n}} \tau(f)
& = \sum_{\substack{\textrm{monic $g \in \A$} \\ \deg g \le n}} \sum_{\substack{\textrm{monic $h \in \A$} \\ \deg h = n - \deg g}} 1
      = \sum_{\substack{\textrm{monic $g \in \A$} \\ \deg g \le n}} q^{n - \deg g}  \\
& = q^n \sum_{j=0}^{n} q^{-j} \cdot q^j = (n+1)q^n.
\end{align*}
\end{proof}

We now present some counting results for polynomials in $\A$ according to the numbers of their monic factors
and their maximal monic irreducible factors.
These are needed for proving Theorem~\ref{thm:Erdos}.

\begin{lemma} \label{lem:S1}
For any integer $n \ge 1$ and any real $r \ge 1$, let $B = 3r \log \log q^n$ and define
$$
\cT_1(n,r) = \{\textrm{monic }f \in \A:\, \deg f=n, \omega(f) > B\}.
$$
Then, we have
$$
|\cT_1(n,r)| < \frac{3q^n}{(\log q^n)^r}.
$$
If furthermore $n \ge 3$ and $r \ge 2$, we have
\begin{equation}  \label{eq:S1}
|\cT_1(n,r)| < \frac{q^n}{(\log q^n)^r}.
\end{equation}
Moreover, if $q \ge 3, n \ge 4$ and $r \ge 3$, in $\cT_1(n,r)$ we can choose $B = 2r\log \log q^n$,
and then the estimate~\eqref{eq:S1} still holds.
\end{lemma}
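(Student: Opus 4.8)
The plan is to run a finite-field analogue of the Chebyshev--Markov inequality with the multiplicative weight $2^{\omega(f)}$. The starting point is the elementary observation that $2^{\omega(f)}$ equals the number of monic \emph{squarefree} divisors of $f$; since each such divisor is in particular a monic divisor, $2^{\omega(f)}\le\tau(f)$, and Lemma~\ref{lem:tau} gives at once
\[
\sum_{\substack{\textrm{monic }f\in\A\\ \deg f=n}} 2^{\omega(f)} \le \sum_{\substack{\textrm{monic }f\in\A\\ \deg f=n}} \tau(f) = (n+1)q^n .
\]
(If more room were needed in the tight cases one could instead use the exact value $\sum 2^{\omega(f)} = q^n\big((n+1)-(n-1)/q\big)$, but the crude bound already suffices.)

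Next I would convert the condition $\omega(f)>B$ into the weight. Since $\omega(f)>B$ is the same as $2^{\omega(f)}>2^{B}$, a one-line Markov estimate gives, whenever $\cT_1(n,r)\neq\emptyset$,
\[
|\cT_1(n,r)| \;<\; 2^{-B}\!\!\sum_{\substack{\textrm{monic }f\\ \deg f=n}}\! 2^{\omega(f)} \;\le\; (n+1)q^n\,2^{-B},
\]
the strict inequality coming from $2^{\omega(f)}>2^{B}$ on the (nonempty) set. It then remains to simplify $2^{-B}$: writing $L=\log q^n$ and $B=\alpha r\log\log q^n=\alpha r\log L$, one has $2^{-B}=L^{-\alpha r\log 2}$, so the exponent is $3\log2\approx 2.079$ when $\alpha=3$ and $2\log2\approx1.386$ when $\alpha=2$.

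With this, each of the three assertions reduces to an elementary inequality. For the first two parts ($\alpha=3$) the target $|\cT_1(n,r)|<C\,q^n/L^{r}$ (with $C=3$, resp.\ $C=1$) becomes
\[
(n+1)\,L^{-(3\log2-1)r} < C, \qquad 3\log2-1\approx1.079,
\]
and for the third part ($\alpha=2$, $C=1$) it becomes $(n+1)L^{-(2\log2-1)r}<1$ with $2\log2-1\approx0.386$. I would regard the left-hand side as a function of $n$, $q$, $r$ and verify it is decreasing in each on the relevant range: monotonicity in $r$ and in $q$ is immediate once $L>1$, while monotonicity in $n$ holds as soon as the exponent $(\alpha\log2-1)r$ is at least $1$. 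For $\alpha=3$ this is guaranteed already by $r\ge1$, and for $\alpha=2$ it forces $r\ge 1/(2\log2-1)\approx2.59$, i.e.\ $r\ge3$ --- which is exactly the hypothesis imposed in the third part. These reductions leave only the extremal small cases to check, namely $(n,q,r)\in\{(1,3,1),(2,2,1)\}$ for the first bound, $(n,q,r)=(3,2,2)$ for the second, and $(n,q,r)=(4,3,3)$ for the third, each confirmed by a direct numerical evaluation.

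The one configuration outside the range $L>1$ is the degenerate case $q^n=2$ (that is $q=2$, $n=1$), which can only occur for the weak bound. There $L^{r}=(\log2)^{r}<1$, so $3q^n/L^{r}>3q^n>q^n\ge|\cT_1(n,r)|$ and the inequality holds trivially. The main obstacle is not the architecture of the proof but the tightness of the constants: in the extremal cases the two sides are genuinely close (the third bound, for instance, leaves only about ten percent of slack), so the monotonicity reductions and the base-case computations must be carried out with care, and essentially nothing can be thrown away.
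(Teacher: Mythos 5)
Your proposal is correct and follows essentially the same route as the paper: both bound $|\cT_1(n,r)|$ by a Markov-type inequality using $2^{\omega(f)}\le\tau(f)$ together with Lemma~\ref{lem:tau}, and then reduce everything to the elementary inequality $(n+1)(\log q^n)^{-(\alpha\log 2-1)r}<C$. The only differences are in the final bookkeeping: the paper verifies that inequality by direct chains of estimates, while you use monotonicity in $n,q,r$ plus base-case checks, and you handle the degenerate case $q^n=2$ (where $\log q^n<1$) explicitly, a point the paper's chain of inequalities silently glosses over.
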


\begin{proof}
By definition, we have $\tau(f) \ge 2^{\omega(f)}$ for any non-zero $f \in \A$.
Using Lemma~\ref{lem:tau}, we deduce that
\begin{align*}
(n+1)q^n
&= \sum_{\substack{\textrm{monic $f \in \A$} \\ \deg f = n}} \tau(f)
  \ge \sum_{\substack{\textrm{monic $f \in \A$} \\ \deg f = n}} 2^{\omega(f)}
  \ge  \sum_{\textrm{$f \in \cT_1(n,r)$}} 2^{\omega(f)}  \\
& >  \sum_{\textrm{$f \in \cT_1(n,r)$}} 2^{3r\log \log q^n}
   =  2^{3r\log \log q^n} |\cT_1(n,r)|.
\end{align*}
So, we obtain
$$
|\cT_1(n,r)| < \frac{(n+1)q^n}{2^{3r\log \log q^n}}
= \frac{(n+1)q^n}{(\log q^n)^{3r\log 2}}
<  \frac{(n+1)q^n}{(\log q^n)^{r+1}}
< \frac{3q^n}{(\log q^n)^r}.
$$
If furthermore  $n \ge 3$ and $r \ge 2$, we have
$$
|\cT_1(n,r)| <  \frac{(n+1)q^n}{(\log q^n)^{3r\log 2}}
<  \frac{(n+1)q^n}{(\log q^n)^{2r}}
< \frac{q^n}{(\log q^n)^r},
$$
where the last inequality comes from
$$
(n \log q)^r \ge (n \log q)^2 \ge (n \log 2)^2 > n+1, \quad n \ge 3.
$$

The final part follows from
$$
|\cT_1(n,r)| <  \frac{(n+1)q^n}{(\log q^n)^{2r\log 2}}< \frac{q^n}{(\log q^n)^r}
$$
when $q \ge 3, n \ge 4$ and $r \ge 3$.
\end{proof}

\begin{lemma} \label{lem:S2}
For any integer $n \ge 1$ and any real $r \ge 1$, let $D=2r\log \log q^n$ and define
\begin{align*}
\cT_2(n,r) = \{& \textrm{monic }f \in \A:\, \deg f=n, \\
& P^2 \mid f \textrm{ for some irreducible polynomial  $P$ with $\deg P > D$}\}.
\end{align*}
Then, if $D \ge 4$, we have
\begin{equation}  \label{eq:S2}
|\cT_2(n,r)| <  \frac{q^n}{(\log q^n)^r}.
\end{equation}
Moreover, if $q \ge 3$, in $\cT_2(n,r)$ we can choose $D=r\log \log q^n \ge 12$,
and then the estimate~\eqref{eq:S2} still holds.
\end{lemma}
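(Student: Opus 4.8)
The plan is to bound $|\cT_2(n,r)|$ by a union bound over the offending repeated irreducible factor $P$, and then to show that the resulting series is dominated by a single negative power of $\log q^n$.

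First I would fix a monic irreducible polynomial $P$ of degree $m$ and count the monic $f$ of degree $n$ with $P^2 \mid f$. Writing $f = P^2 g$ forces $g$ to be monic of degree $n-2m$, so there are exactly $q^{n-2m}$ such $f$ when $2m \le n$ and none otherwise. Summing over all monic irreducible $P$ of degree $m > D$, using that the number of monic irreducibles of degree $m$ is at most $q^m/m$, and extending the range harmlessly to all $m > D$, gives
$$
|\cT_2(n,r)| \le \sum_{m > D} \frac{q^m}{m}\, q^{n-2m} = q^n \sum_{m > D} \frac{q^{-m}}{m}.
$$

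Next I would estimate the tail. With $m_0 = \lfloor D \rfloor + 1$ the least integer exceeding $D$, bounding $1/m \le 1/m_0 < 1/D$ and summing the geometric series yields
$$
\sum_{m > D} \frac{q^{-m}}{m} < \frac{1}{D}\cdot\frac{q^{-D}}{1-q^{-1}} \le \frac{2}{D}\,q^{-D},
$$
using $q/(q-1)\le 2$ for $q \ge 2$. The crucial algebraic step is to rewrite $q^{-D}$ as a power of $\log q^n$: since $\log$ here is the natural logarithm (as in Lemma~\ref{lem:S1}), the identity $q^{\log x}=x^{\log q}$ gives $q^{-D}=q^{-2r\log\log q^n}=(\log q^n)^{-2r\log q}$. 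Therefore it suffices to verify
$$
\frac{2}{D} < (\log q^n)^{\,2r\log q - r}=(\log q^n)^{\,r(2\log q-1)}.
$$

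The main point, and the only place the hypothesis $D \ge 4$ is used, is that $D \ge 4 > 0$ forces $\log\log q^n > 0$, hence $\log q^n > 1$; together with $2\log q - 1 \ge 2\log 2 - 1 > 0$ for $q \ge 2$, the right-hand side exceeds $1$, while $2/D \le 1/2 < 1$. This establishes \eqref{eq:S2}. For the final assertion I would repeat the computation with $D = r\log\log q^n$: now $q^{-D}=(\log q^n)^{-r\log q}$, and since $q \ge 3$ gives $\log q - 1 \ge \log 3 - 1 > 0$ and $q/(q-1) \le 3/2$, the same comparison reduces to $3/(2D) < (\log q^n)^{r(\log q - 1)}$, which holds because $D \ge 12$ again forces $\log q^n > 1$ and hence makes the right-hand side exceed $1 > 3/(2D)$. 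I expect the only delicate part to be the bookkeeping that absorbs the geometric-series constant and the $1/m$ factor into the gap between the exponent $r(2\log q - 1)$ (respectively $r(\log q - 1)$) and $0$; everything else is routine.
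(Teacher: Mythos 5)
Your proof is correct and follows essentially the same route as the paper's: a union bound over the offending repeated irreducible factor $P$, writing $f = P^2 g$ with $g$ monic of degree $n - 2\deg P$, summing a geometric series over $\deg P > D$, and converting $q^{-D}$ into the power $(\log q^n)^{-2r\log q}$ (respectively $(\log q^n)^{-r\log q}$ when $q \ge 3$). The only real difference is bookkeeping: you invoke the sharper count $q^m/m$ for monic irreducibles of degree $m$, which retains a factor $1/D$ and reduces the final comparison to the trivial observation that $(\log q^n)^{r(2\log q - 1)} > 1 > 2/D$, whereas the paper uses the cruder count $q^m$ and must verify $q < (\log q^n)^{r(2\log q - 1)}$ by extracting $(\log q^n)^r \ge \exp(2)$ from the hypothesis $D \ge 4$ --- so your version makes the endgame slightly cleaner, but the approach is the same.
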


\begin{proof}
For any $f \in \cT_2(n,r)$, we can write $f=gP^2$ with $D < \deg P \le n/2$ and $\deg g = n - 2 \deg P$.
So, we have
\begin{align*}
|\cT_2(n,r)|
& \le \sum_{\substack{\textrm{monic irreducible $P \in \A$} \\ D < \deg P \le n/2}} \sum_{\substack{\textrm{monic $g \in \A$} \\ \deg g = n - 2\deg P}} 1 \\
& = \sum_{\substack{\textrm{monic irreducible $P \in \A$} \\ D < \deg P \le n/2}} q^{n - 2\deg P}
   < q^n \sum_{j=\lfloor D \rfloor+1}^{\infty} q^{-2j} \cdot q^j \\
& \le \frac{q^n}{q^{\lfloor D \rfloor}}  < \frac{q^n}{q^{-1+ 2r\log\log q^n}} = \frac{q^{n+1}}{(\log q^n)^{2r\log q}}  < \frac{q^n}{(\log q^n)^r},
\end{align*}
where the last inequality follows from $(\log q^n)^r \ge \exp(2)$ (due to $D \ge 4$), because it is equivalent to the following inequality
$$
q < (\log q^n)^{r(2\log q - 1)}.
$$

The second part follows similarly.
\end{proof}

\begin{lemma} \label{lem:S3}
For any integer $n \ge 1$ and any real $r \ge 1$, let $D=2r\log \log q^n$ and define
\begin{align*}
\cT_3 (n,r) = \{& \textrm{monic }f \in \A:\, \deg f=n, P^e \mid f, \\
&  e \ge D \textrm{ for some irreducible polynomial  $P$ with $\deg P \le D$}\}.
\end{align*}
Then, if $D \ge 8$, we have
\begin{equation}  \label{eq:S3}
|\cT_3(n,r)| < \frac{q^n}{(\log q^n)^r}.
\end{equation}
Moreover, if $q \ge 3$, in $\cT_3(n,r)$ we can choose $D=r\log \log q^n \ge 19$,
and then the estimate~\eqref{eq:S3} still holds.
\end{lemma}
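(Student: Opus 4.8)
The plan is to follow the proof of Lemma~\ref{lem:S2} closely, with the single fixed exponent $2$ there replaced by a summation over all admissible exponents $e \ge D$. First I would set up a union bound. Every $f \in \cT_3(n,r)$ is divisible by $P^{e}$ for some monic irreducible $P$ with $\deg P \le D$ and some integer $e \ge D$, so I may write $f = gP^{e}$ with $g$ monic of degree $n - e\deg P$ and bound $|\cT_3(n,r)|$ by the number of such triples $(P,e,g)$. Grouping the irreducibles $P$ by their degree $d=\deg P$, and using that there are at most $q^{d}$ monic polynomials (hence at most $q^{d}$ monic irreducibles) of degree $d$, I get
\begin{equation*}
|\cT_3(n,r)| \le \sum_{d=1}^{\lfloor D \rfloor} q^{d} \sum_{e \ge D} q^{\,n - ed},
\end{equation*}
where extending the inner sum to all integers $e \ge D$ (instead of only those with $ed \le n$) merely enlarges the right-hand side.

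Next I would evaluate the two nested geometric series. The inner sum over $e \ge D$ contributes at most $q^{-dD}/(1-q^{-d})$, and after pulling out $q^{n}$ the outer sum over $d$ becomes $\sum_{d\ge 1} q^{d(1-D)}/(1-q^{-d})$, which is itself geometric with ratio $q^{1-D}<1$ because $D \ge 8 > 1$. Bounding $1/(1-q^{-d}) \le 1/(1-q^{-1})$ uniformly and summing yields
\begin{equation*}
|\cT_3(n,r)| \le \frac{q^{\,n+1-D}}{(1-q^{-1})(1-q^{1-D})}.
\end{equation*}
The hypothesis $D \ge 8$ forces the correction factor $1/(1-q^{1-D})$ to be extremely close to $1$, so the constant prefactor $C = 1/\big((1-q^{-1})(1-q^{1-D})\big)$ stays below roughly $2$ for every $q \ge 2$; I would keep track of $C$ rather than discarding it.

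It then remains to substitute $D = 2r\log\log q^{n}$, which gives $q^{-D} = (\log q^{n})^{-2r\log q}$, and to verify that the displayed bound is smaller than $q^{n}/(\log q^{n})^{r}$. Exactly as in Lemma~\ref{lem:S2}, this is equivalent to an inequality of the shape $Cq < (\log q^{n})^{r(2\log q - 1)}$. Using $D \ge 8$ in the form $(\log q^{n})^{r} \ge \exp(4)$, the right-hand side is at least $\exp(-4)\,q^{8}$, so everything reduces to the numerical inequality $C\exp(4) < q^{7}$. The moreover part is identical with $D = r\log\log q^{n}$, where $q^{-D} = (\log q^{n})^{-r\log q}$ and, using $(\log q^{n})^{r} \ge \exp(19)$, the target reduces to $C\exp(19) < q^{18}$.

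I expect the main obstacle to be precisely this final numerical step at the smallest fields, where the estimates are genuinely tight: for $q=2$ one compares $q^{7}=128$ against $C\exp(4)\approx 110$, and for $q=3$ one compares $q^{18}\approx 3.9\times 10^{8}$ against $C\exp(19)\approx 2.7\times 10^{8}$. This is exactly why the hypotheses read $D \ge 8$ in general and $D \ge 19$ in the sharpened range; the restriction $q \ge 3$ in the moreover part is also essential, since it is what keeps the exponent $\log q - 1$ positive (for $q=2$ one has $\log q - 1 < 0$ and the halved threshold $D = r\log\log q^{n}$ would produce no decay at all).
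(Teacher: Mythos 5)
Your proposal is correct and takes essentially the same route as the paper's proof: a union bound over irreducibles $P$ grouped by degree (using at most $q^d$ irreducibles of degree $d$), a geometric series giving a bound of the shape $Cq^{n+1-D}$ with $C$ close to $2$, and the same final reduction to a numerical inequality via $(\log q^n)^r \ge \exp(4)$ (resp. $\exp(19)$) exactly as in Lemma~\ref{lem:S2}. The only cosmetic difference is that the paper avoids your sum over all exponents $e \ge D$ by noting that $P^e \mid f$ with $e \ge D$ already implies $P^{\lceil D \rceil} \mid f$, so it fixes the single exponent $d = \lceil D \rceil$ and gets the clean constant $2$ in place of your $C$.
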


\begin{proof}
Let $d = \lceil D \rceil \ge 8$.
By definition, for any $f \in \cT_3(n,r)$, there exists a monic irreducible polynomial  $P$ such that $\deg P \le D$ and $P^d \mid f$.
As in the proof of Lemma~\ref{lem:S2}, we have
\begin{align*}
|\cT_3(n,r)|
& \le \sum_{\substack{\textrm{monic irreducible $P \in \A$} \\ 1 \le \deg P \le D}} \sum_{\substack{\textrm{monic $g \in \A$} \\ \deg g = n - d\deg P}} 1 \\
& = \sum_{\substack{\textrm{monic irreducible $P \in \A$} \\ 1 \le \deg P \le D}} q^{n - d\deg P}
   < q^n \sum_{j=1}^{\infty} q^{-dj} \cdot q^j \\
& \le \frac{2q^n}{q^{D-1}}  = \frac{2q^{n+1}}{q^{2r\log \log q^n}} = \frac{2q^{n+1}}{(\log q^n)^{2r\log q}}  < \frac{q^n}{(\log q^n)^r},
\end{align*}
where the last inequality follows from $(\log q^n)^r \ge \exp(4)$ (due to $D \ge 8$),
because it is equivalent to the following inequality
$$
2q < (\log q^n)^{r(2\log q - 1)}.
$$

The second part follows similarly.
\end{proof}

\section{Basic properties}
\label{sec:basic}

\subsection{Computing the Smarandache function}

By definition, we directly obtain some simple properties about the Smarandache function.

\begin{proposition}   \label{prop:basic}
The following hold:
\begin{itemize}
\item[(1)] for any polynomial $f \in \A$ and any $a \in \F_q^*$,  $S(af)=S(f)$;

\item[(2)] for any non-zero polynomial $f \in \A$,  $S(f) \le t^{\deg f}$;

\item[(3)] for any irreducible polynomial $f \in \A$, $S(f) = t^{\deg f}$.
\end{itemize}
\end{proposition}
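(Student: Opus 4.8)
The plan is to treat the three parts in order, relying throughout on the divisibility definition of $S$ together with the valuation formula of Lemma~\ref{lem:val}. Part (1) is purely formal: for $a \in \F_q^*$ the polynomials $f$ and $af$ are associates in $\A$, so for every $g \in \A$ one has $f \mid g!$ if and only if $af \mid g!$. Since $S(f)$ and $S(af)$ are by definition the $\delta$-smallest $g$ satisfying the respective (identical) divisibility condition, they are minima of the same subset of $\A$ and hence coincide. No computation is needed.

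For part (2) I would first reduce to the monic case using part (1): writing $c = \sign(f)$ and $n = \deg f$, we have $S(f) = S(f/c)$ with $f/c$ monic of degree $n$. Then I would invoke the description recorded just below Definition~\ref{def:factorial}, which also follows from Lemma~\ref{lem:factorial} by specializing to $f = t^n$ (noting $\delta(t^n) = q^n$): namely that $t^n!$ is the product of \emph{all} monic polynomials of degree $n$. Since $f/c$ is one of these factors, $f/c \mid t^n!$, whence $S(f/c) \le t^n$ and therefore $S(f) \le t^{\deg f}$.

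For part (3), let $d = \deg f$ with $f$ irreducible. Part (2) already gives $S(f) \le t^d$, so it remains to prove $S(f) \ge t^d$, i.e.\ that no $g$ with $g < t^d$ satisfies $f \mid g!$. Here the key tool is Lemma~\ref{lem:val}: since $f$ is irreducible of degree $d$,
$$
v_f(g!) = \sum_{j \ge 1} \left\lfloor \frac{\delta(g)}{q^{dj}} \right\rfloor.
$$
The condition $g < t^d$ means $\delta(g) < \delta(t^d) = q^d$, so every summand vanishes (already the $j=1$ term satisfies $0 \le \delta(g)/q^d < 1$, giving $\lfloor \delta(g)/q^d \rfloor = 0$, and the later terms are no larger). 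Thus $v_f(g!) = 0$, i.e.\ $f \nmid g!$, for all $g < t^d$, which together with part (2) yields $S(f) = t^d$.

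There is no serious obstacle in this proposition; the content is bookkeeping rather than any genuine difficulty. The only points demanding care are the correct normalization to the monic representative in part (2) and the exact value $\delta(t^d) = q^d$ in part (3), on which the clean vanishing of the floor sum depends; an off-by-one in the exponent $dj$ or in the index range is the one place where a slip could occur.
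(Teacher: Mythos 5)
Your proof is correct. The paper itself offers no written proof of Proposition~\ref{prop:basic} --- it presents all three parts as immediate consequences of the definition --- and your argument is precisely the natural filling-in of those details: associates divide the same factorials for (1), the observation (recorded below Definition~\ref{def:factorial}) that $t^n!$ is the product of all monic polynomials of degree $n$ for (2), and the vanishing of the valuation sum of Lemma~\ref{lem:val} when $\delta(g) < q^d$ for (3).
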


So, in order to compute the Smarandache function, we only need to consider monic polynomials.
By Definition \ref{def:kem} and Corollary \ref{cor:div}, we immediately obtain the following result,
which implies that we in fact only need to consider powers of monic irreducible polynomials.

\begin{proposition}\label{prop:comp1}
 Suppose that $P_1, P_2,\ldots, P_k$ are distinct monic irreducible polynomials
and $e_1,e_2,\ldots,e_k$ are positive integers. Then
 \begin{equation*}
 S(P_1^{e_1}P_2^{e_2}\cdots P_k^{e_k})=\max\{S(P_1^{e_1}),S(P_2^{e_2}),\ldots,S(P_k^{e_k})\}.
 \end{equation*}
\end{proposition}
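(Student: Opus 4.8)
The plan is to reduce the single divisibility condition $f \mid g!$ to the separate prime-power conditions $P_i^{e_i} \mid g!$, and then exploit the order-monotonicity of the factorial supplied by Corollary~\ref{cor:div}. Write $f = P_1^{e_1}\cdots P_k^{e_k}$ and set $M = \max\{S(P_1^{e_1}),\ldots,S(P_k^{e_k})\}$, say with $M = S(P_m^{e_m})$ for some index $m$. Since the $P_i$ are distinct monic irreducibles, the factors $P_i^{e_i}$ are pairwise coprime in the unique factorisation domain $\A$, so for any $h \in \A$ one has $f \mid h$ if and only if $P_i^{e_i} \mid h$ for every $i$. This equivalence is the one routine fact I would record at the outset, and the whole argument rests on applying it with $h = g!$.

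First I would check that $f \mid M!$. For each $i$ we have $S(P_i^{e_i}) \le M$ by the choice of $M$, so Corollary~\ref{cor:div} gives $S(P_i^{e_i})! \mid M!$; combined with $P_i^{e_i} \mid S(P_i^{e_i})!$ coming straight from Definition~\ref{def:kem}, this yields $P_i^{e_i} \mid M!$ for every $i$. By the coprimality reduction above, $f \mid M!$, so $M$ is a legitimate candidate and hence $S(f) \le M$.

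Next I would show that no smaller polynomial works, i.e. $S(f) \ge M$. If $g < M = S(P_m^{e_m})$, then the minimality built into Definition~\ref{def:kem} forces $P_m^{e_m} \nmid g!$; since $P_m^{e_m} \mid f$, it follows that $f \nmid g!$. Thus every $g$ with $f \mid g!$ satisfies $g \ge M$, and together with the previous paragraph this pins down $S(f) = M$, which is the assertion.

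The argument is short, and its only genuinely load-bearing step is the coprimality reduction $f \mid h \iff P_i^{e_i} \mid h$ for all $i$; everything else is a direct appeal to Definition~\ref{def:kem} and to the monotonicity of the factorial from Corollary~\ref{cor:div}. I do not anticipate a real obstacle here, the main thing to be careful about is keeping the inequalities oriented correctly with respect to the order on $\A$ defined through $\delta$, so that ``$S(P_i^{e_i}) \le M$'' legitimately feeds into Corollary~\ref{cor:div}.
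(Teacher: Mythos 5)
Your proof is correct and is exactly the argument the paper has in mind: the paper simply asserts that the result follows immediately from Definition~\ref{def:kem} and Corollary~\ref{cor:div}, and your write-up (coprimality of the $P_i^{e_i}$, monotonicity of the factorial for $S(f)\le M$, minimality of $S(P_m^{e_m})$ for $S(f)\ge M$) is precisely the routine expansion of that one-line justification.
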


The case of irreducible polynomials is straightforward.
We can in fact do more.

\begin{proposition}
Given a polynomial $f \in \A$ with $\deg f \ge 1$, assume that either $q \ge 3$ or  $f \ne b(t+c)^2$ for any $b,c \in \F_q$.
Then,  $f$ is an irreducible polynomial if and only if $S(f)=t^{\deg f}$.
\end{proposition}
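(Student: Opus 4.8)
The plan is to prove both directions, with the forward direction (irreducible $\Rightarrow$ $S(f)=t^{\deg f}$) already handled by Proposition~\ref{prop:basic}(3). So the real content is the converse: assuming $S(f)=t^{\deg f}$ and the stated hypothesis, I must show $f$ is irreducible. I would argue by contrapositive. Suppose $f$ is \emph{not} irreducible, and show $S(f) < t^{\deg f}$ (strictly smaller in our order, i.e.\ $\delta(S(f)) < \delta(t^{\deg f}) = q^{\deg f}$). Since scaling by a unit does not change $S$ by Proposition~\ref{prop:basic}(1), I may assume $f$ is monic. Write $n = \deg f$. By Proposition~\ref{prop:comp1}, $S(f) = \max_i S(P_i^{e_i})$ over the distinct monic irreducible factors $P_i^{e_i}$ of $f$. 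The key reduction is that $S(f) = t^n$ would force some factor $P_i^{e_i}$ to have $S(P_i^{e_i}) = t^n$, and I need to show this is impossible once $f$ is reducible.

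The central estimate I would establish is an upper bound on $S(P^e)$ for a single prime power. Let $d = \deg P$ and suppose $P^e \mid f$ with $de \le n$. By Lemma~\ref{lem:val}, $S(P^e)$ is the smallest $g$ with $v_P(g!) = \sum_{j\ge 1}\lfloor \delta(g)/q^{dj}\rfloor \ge e$. The smallest such $g$ therefore has $\delta(S(P^e))$ of size roughly $e\,q^d$ (more precisely governed by the base-$q^d$ digit analysis underlying Lemma~\ref{lem:rep} with $n$ replaced by $q^d$). The crucial comparison is: when is $\delta(S(P^e)) < q^n$? Since $f$ is reducible, either $P$ has degree $d$ with $de \le n$ but $P^e \ne f$ (so there is another factor and $de < n$, giving room), or $P^e = f$ itself is a proper power with $e \ge 2$ and $d \cdot e = n$. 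The first case should yield $\delta(S(P^e)) \approx e q^d \le (n/d) q^d$, which is comfortably below $q^n$ for $n \ge 2$. Taking the maximum over all factors still stays below $q^n$, giving $S(f) < t^n$. The delicate case is the second: $f = P^e$ a pure power of a single irreducible, where $de = n$.

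The hard part will be the pure prime-power case $f = P^e$ with $e \ge 2$ and $de = n$, precisely where the hypothesis ``either $q \ge 3$ or $f \ne b(t+c)^2$'' must be used. Here I need to show $\delta(S(P^e)) < q^{de}$. Using Lemma~\ref{lem:val}, I want the smallest $g$ with $\sum_{j \ge 1}\lfloor \delta(g)/q^{dj}\rfloor \ge e$; writing $m = \delta(g)$ in base $q^d$, the valuation equals the sum $\sum_{j\ge 1}\lfloor m/q^{dj}\rfloor$, and minimizing $m$ subject to this sum being $\ge e$ is exactly the representation problem of Lemma~\ref{lem:rep} (with $b_j = (q^{dj}-1)/(q^d-1)$, so $\sum_{i<j} q^{di}$ contributes the valuation). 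The smallest admissible $m$ then satisfies $m < q^{de}$ precisely when $e < q^d$ or a boundary condition holds; the unique failure is the extremal configuration forcing $m = q^{de}$, which corresponds to $g = t^n$ itself. A digit count shows this extremal case arises only when $d = 1$ and $e = q$ (so $q^d = q = e$), i.e.\ $f = P^q$ with $P$ linear and $n = q$; and the genuinely problematic small instance is $q = 2$, $d = 1$, $e = 2$, giving $f = (t+c)^2$ after normalization — exactly the excluded polynomial $b(t+c)^2$. I would carry out the digit bookkeeping via Lemma~\ref{lem:rep} to confirm that, outside this single excluded family, the minimal $m$ is strictly less than $q^{de}$, so $S(P^e) < t^n$ and hence $S(f) < t^n$, completing the contrapositive.
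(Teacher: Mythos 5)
Your proposal is correct, but it takes a genuinely different route from the paper's proof. Both arguments begin the same way: the forward direction is Proposition~\ref{prop:basic}~(3), and Proposition~\ref{prop:comp1} together with Proposition~\ref{prop:basic}~(2) reduces the converse to the case $f = P^e$ of a single prime power with $d = \deg P$ and $e \ge 2$ (when $f$ has two or more distinct irreducible factors, each factor already satisfies $S(P_i^{e_i}) \le t^{d_i e_i} < t^n$, so no size estimates like your $\delta(S(P^e)) \approx eq^d$ are even needed there). The divergence is in how the prime-power case is killed. The paper never invokes Proposition~\ref{prop:comp2}: it exhibits an explicit polynomial smaller than $t^{de}$ whose factorial is divisible by $P^e$ --- for $q \ge 3$ the polynomial $a_2 P^{e-1}$, whose factorial contains the two distinct terms $a_2 P^{e-1}$ and $(a_2 - 1)P^{e-1}$, giving $v_P \ge 2(e-1) \ge e$; for $q = 2$ it uses $P^{e-1}$ (when $d = 1$, $e \ge 3$) or $t^{d(e-1)+1}$ (when $d \ge 2$), whose factorial contains both $tP^{e-1}$ and $(t+1)P^{e-1}$. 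You instead appeal to the exact formula of Proposition~\ref{prop:comp2} (equivalently Lemma~\ref{lem:rep}) and run an extremal digit analysis; this works and can be made very clean: writing $e = \sum_i c_i b_{j_i}$, one has $\delta(S(P^e)) = (q^d - 1)e + \sum_i c_i \le q^d e$ (since $\sum_i c_i \le e$), and $q^d e < q^{de}$ for $e \ge 2$ unless $q^d = e = 2$, which is exactly the excluded family $b(t+c)^2$ over $\F_2$. Note this is not circular, since the proof of Proposition~\ref{prop:comp2} does not depend on the present proposition, though in the paper's ordering your proof would require a forward reference. What each approach buys: the paper's construction is self-contained at that point of the text and avoids bookkeeping, while yours yields strictly more information --- an exact characterization of when $S(P^e) = t^{de}$, hence a conceptual explanation of why $(t+c)^2$ over $\F_2$ is the unique exception (and of the remark $S(t^2) = S(t^2+1) = t^2$ for $q=2$). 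One imprecision to fix: your intermediate claim that the extremal configuration ``arises only when $d = 1$ and $e = q$'' is loose, since for $q \ge 3$, $d = 1$, $e = q$ one gets $S(P^q) = \delta^{-1}(q^2) = t^2 < t^q$, which is not extremal; the correct statement is that the failure occurs precisely when $q^d = e = 2$, which your final sentence does identify.
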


\begin{proof}
We only need to prove the sufficiency.
Assume that $S(f)=t^{\deg f}$.
Without loss of generality, we can further assume that $f$ is monic.
By Proposition~\ref{prop:comp1}, we must have $f=P^e$ for some monic irreducible polynomial $P$ of degree $d$ and $e \ge 1$.

We first assume that $q \ge 3$.
If $e \ge 2$, since $a_2 P^{e-1}$ and $(a_2-1) P^{e-1}$ are two distinct terms in the factorial $(a_2 P^{e-1})!$ by definition,
 we have
$$
v_P((a_2 P^{e-1})!) \ge 2(e-1) \ge e,
$$
and so $S(P^e) \le a_2 P^{e-1} < t^{de}$, which contradicts with the assumption $S(P^e) = t^{de}$.
Thus, $f=P$ when $q \ge 3$.

We now assume that $q=2$.
By assumption, $f \ne (t+c)^2$ for any $c \in \F_q$.
So, if $d = 1$, we must have $e \ge 3$, and so $v_P(P^{e-1}!) \ge e$,
which implies $S(P^e) \le P^{e-1} < t^{e}$ and contradicts with the assumption $S(P^e) = t^{e}$.
Thus, we must have $d \ge 2$.
If $e \ge 2$, since $tP^{e-1}$ and $(t+1)P^{e-1}$ are two distinct terms in the factorial $t^{d(e-1)+1}!$ by definition, we have
$$
v_P(t^{d(e-1)+1}!) \ge 2(e-1) \ge e,
$$
and so $S(P^e) \le t^{d(e-1)+1} < t^{de}$, which contradicts with the assumption $S(P^e) = t^{de}$.
Thus, $f=P$. This completes the proof.
\end{proof}

We remark that in the case $q=2$, we have $S(t^2) = S(t^2+1) =  t^2$.

We now handle the case of powers of irreducible polynomials
 by following the strategy used to prove the theorem in \cite[page 208]{Kem}  (also \cite[Theorem 1]{Sma}).

\begin{proposition}\label{prop:comp2}
 Suppose that $P \in \A$ is an irreducible polynomial of degree $d \ge 1$ and $e$ is a positive integer.
 Define the sequence $b_{j}=\frac{q^{dj}-1}{q^{d}-1}, j\in\N^*$.
  Then, $e$ is uniquely written as
\begin{equation*}
e=c_{1} b_{j_1}+ c_{2} b_{j_2} + \cdots+c_{k} b_{j_k},
\end{equation*}
and
\begin{equation*}
S(P^e)= \delta^{-1}(c_{1}q^{dj_1}+c_{2}q^{dj_2}+\cdots+c_{k}q^{dj_k}),
\end{equation*}
where $ j_1>j_2>\cdots>j_k>0 $
and $1\leq c_i < q^{d}$ for $i = 1,2,\ldots,k-1, 1\leq c_k\leq q^{d}$.
\end{proposition}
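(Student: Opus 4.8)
The plan is to turn the computation of $S(P^e)$ into an arithmetic statement about base-$q^d$ expansions and then to identify the representation of Lemma~\ref{lem:rep} as its solution. Write $Q=q^d$ and, for $m\in\N$, put $\phi(m)=\sum_{i\ge 1}\lfloor m/Q^i\rfloor$. By Lemma~\ref{lem:val}, $v_P(g!)=\phi(\delta(g))$ for every non-zero $g\in\A$, and since $\delta$ is an order-preserving bijection of $\A$ onto $\N$, determining $S(P^e)$ is the same as determining the least $m\in\N$ with $\phi(m)\ge e$ and then taking $\delta^{-1}$. The basic values I would record are $\phi(cQ^{j})=cb_{j}$ for $1\le c<Q$ and $\phi(Q^{j+1})=b_{j+1}=Qb_{j}+1$.

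The uniqueness clause is immediate: applying Lemma~\ref{lem:rep} with $n=Q=q^d>1$ produces exactly the asserted representation $e=c_1b_{j_1}+\cdots+c_kb_{j_k}$ with $j_1>\cdots>j_k>0$, $1\le c_i<Q$ for $i<k$, and $1\le c_k\le Q$. Setting $m^\ast=c_1Q^{j_1}+\cdots+c_kQ^{j_k}=c_1q^{dj_1}+\cdots+c_kq^{dj_k}$, everything reduces to showing that $m^\ast$ is the least $m$ with $\phi(m)\ge e$.

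For the inequality $\phi(m^\ast)\ge e$ (which gives $P^e\mid g^\ast!$ for $g^\ast=\delta^{-1}(m^\ast)$, hence $S(P^e)\le g^\ast$) I would use the superadditivity $\lfloor(a+b)/Q^i\rfloor\ge\lfloor a/Q^i\rfloor+\lfloor b/Q^i\rfloor$, which yields $\phi(m^\ast)\ge\sum_i\phi(c_iQ^{j_i})\ge\sum_i c_ib_{j_i}=e$; the inequality at the last term absorbs the case $c_k=Q$, where $\phi(Q^{j_k+1})=Qb_{j_k}+1\ge c_kb_{j_k}$.

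The minimality is the crux. Since $\phi$ is non-decreasing, it suffices to prove $\phi(m^\ast-1)<e$. Rather than evaluate $\phi(m^\ast)$ itself, whose base-$Q$ digits are awkward once $c_k=Q$ forces carries, I would write down the base-$Q$ expansion of $m^\ast-1$: as $j_k\ge 1$ we have $Q^{j_k}\mid m^\ast$, so $m^\ast-1$ ends in the digits $Q-1$ at positions $0,\dots,j_k-1$, while on top sits $m^\ast/Q^{j_k}-1$, whose lowest digit becomes the legal digit $c_k-1$. The upshot is $s_Q(m^\ast-1)=(c_1+\cdots+c_k)-1+(Q-1)j_k$, with no carries to track, and feeding this into the Legendre-type identity $\phi(m)=(m-s_Q(m))/(Q-1)$ (itself a rewriting of Lemma~\ref{lem:val}), together with $m^\ast-(c_1+\cdots+c_k)=(Q-1)e$, gives $\phi(m^\ast-1)=e-j_k\le e-1<e$. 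Monotonicity of $\phi$ then forces $\phi(m)<e$ for all $m<m^\ast$, establishing minimality and hence $S(P^e)=\delta^{-1}(m^\ast)$. The one genuinely delicate point is the carry behaviour of $m^\ast$ when $c_k=Q$, and the observation that passing to $m^\ast-1$ converts the illegal digit $q^d$ into the legal digit $q^d-1$ is exactly what sidesteps it.
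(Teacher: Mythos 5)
Your proof is correct, and although it follows the same skeleton as the paper's --- invoke Lemma~\ref{lem:rep} with $n=q^d$ for uniqueness, set $m=c_1q^{dj_1}+\cdots+c_kq^{dj_k}$, show $P^e\mid\delta^{-1}(m)!$ but $P^e\nmid\delta^{-1}(m-1)!$, and finish by monotonicity (your monotonicity of $\phi$ plays exactly the role of the paper's appeal to Corollary~\ref{cor:div}) --- the execution of the two key estimates is genuinely different. The paper verifies both by writing out the floors $\lfloor m/q^{dj}\rfloor$ and $\lfloor (m-1)/q^{dj}\rfloor$ level by level; you instead get $\phi(m)\ge e$ from superadditivity of the floor function, and the minimality from the Legendre identity $\phi(m)=(m-s_Q(m))/(Q-1)$ (in your notation $Q=q^d$) combined with a digit count for $m-1$. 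Your route buys something concrete: it yields the correct exact value $\phi(m-1)=e-j_k$ and absorbs the carry problem when $c_k=q^d$ uniformly. By contrast, the paper's displayed computation at this step contains a slip: it asserts $\lfloor (m-1)/q^{dj_1}\rfloor=c_1-1$ and concludes $v_P(g!)=e-j_1$, but for $k\ge2$ the subtraction of $1$ propagates only through the levels $j\le j_k$ (those with $q^{dj}\mid m$), not all the way up to $j_1$; for instance, with $q=2$, $d=1$, $e=4=b_2+b_1$ one has $m=6$ and $\sum_{j\ge1}\lfloor 5/2^j\rfloor=3=e-j_k$, whereas $e-j_1=2$. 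The paper's conclusion $v_P(g!)<e$ survives because $j_k\ge1$, but your digit-sum argument is the version of this computation that is correct as stated.
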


\begin{proof}
By Lemma \ref{lem:rep}, we know that $e$ is uniquely written in the form:
\begin{equation*}
e=c_{1}b_{j_1}+c_{2}b_{j_2}+\cdots+c_{k}b_{j_k},
\end{equation*}
where $ j_1>j_2>\cdots>j_k>0 $
and $1\leq c_i < q^{d}$ for $i =1,2,\ldots,k-1, 1\leq c_k\leq q^{d}$.
Denote
\begin{align*}
m & =c_{1}q^{dj_1}+c_{2}q^{dj_2}+\cdots+c_{k}q^{dj_k} \\
& =(q^{d}-1)e+(c_{1}+c_{2}+\cdots + c_{k}).
\end{align*}
Since $\delta$  is a bijective map from $\A$ to $\N$,
we take $f=\delta^{-1}(m)$.
Then, it suffices to prove $S(P^e)=f$.

By Lemma~\ref{lem:val} and collecting the following equalities and inequalities
\begin{align*}
 & \left\lfloor\frac{m}{q^{d}}\right\rfloor = c_{1}q^{d(j_{1}-1)}+c_{2}q^{d(j_{2}-1)}+\cdots+c_{k}q^{d(j_{k}-1)}, \\
& \vdots \\
& \left\lfloor\frac{m}{q^{dj_k}}\right\rfloor = c_{1}q^{d(j_1-j_k)}+c_{2}q^{d(j_2-j_k)}+\cdots+c_{k},\\
 & \left\lfloor\frac{m}{q^{d(j_k+1)}}\right\rfloor
\geq
c_{1}q^{d(j_1-j_k-1)}+c_{2}q^{d(j_2-j_k-1)}+\cdots+c_{k-1}q^{d(j_{k-1}-j_{k}-1)}, \\
& \vdots \\
& \left\lfloor\frac{m}{q^{dj_1}}\right\rfloor\geq c_{1},
\end{align*}
 we obtain
\begin{equation*}
 v_P(f!) = \sum_{j \geq 1}\left\lfloor\frac{m}{q^{dj}}\right\rfloor \geq e,
\end{equation*}
which implies that $P^e \mid f!$. Actually,
 $v_P(f!) = e$ if and only if $c_{k} < q^{d}$.

Now, it remains to prove that for any $g\in\A$ and $g<f$, we have $P^e\nmid g!$.
In fact, by Corollary~\ref{cor:div},
we only need to prove that for $g = \delta^{-1}(m-1)$,  $P^e\nmid g!$;
that is, $v_P(g!)<e$. It is easy to obtain the following equalities:
\begin{align*}
 &\left\lfloor\frac{m-1}{q^{d}}\right\rfloor=c_{1}q^{d(j_{1}-1)}+c_{2}q^{d(j_{2}-1)}+\cdots+c_{k}q^{d(j_{k}-1)}-1, \\
& \vdots \\
& \left\lfloor\frac{m-1}{q^{dj_k}}\right\rfloor=c_{1}q^{d(j_1-j_k)}+c_{2}q^{d(j_2-j_k)}+\cdots+c_{k}-1, \\
& \vdots \\
& \left\lfloor\frac{m-1}{q^{dj_1}}\right\rfloor=c_{1}-1.
\end{align*}
Then $v_P(g!)  = \sum_{ j \geq 1}\left\lfloor\frac{m-1}{q^{dj}}\right\rfloor = e-j_{1} < e$.
This completes the proof.
\end{proof}

By Proposition~\ref{prop:comp2}, we directly obtain the following result.

\begin{corollary}
 Suppose that $P \in \A$ is an irreducible polynomial of degree $d$ and $e\leq q^{d}$ is a positive integer.
  Then
 \begin{equation*}
S(P^e)= a_{i_0} t^{d} + a_{i_1} t^{d+1} + \cdots + a_{i_k} t^{d+k},
\end{equation*}
where $e=\sum_{j=0}^{k}i_{j}q^{j} $ is the
$q$-adic expansion of $e$.
\end{corollary}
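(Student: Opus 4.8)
The plan is to obtain the corollary as a direct specialization of Proposition~\ref{prop:comp2}. First I would record the two smallest values of the sequence appearing there: from $b_j=(q^{dj}-1)/(q^d-1)$ one gets $b_1=1$ and $b_2=q^d+1$. The hypothesis $e\le q^d$ then gives $e<b_2$, so in the unique representation $e=c_1b_{j_1}+\cdots+c_kb_{j_k}$ supplied by Proposition~\ref{prop:comp2} no summand with index $j_i\ge 2$ can appear, since each such summand is already at least $b_2>e$. Hence the representation degenerates to a single term with $j_1=1$, namely $e=c_1b_1=c_1$, where $1\le c_1\le q^d$ by the constraint on the last coefficient.

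With this in hand, Proposition~\ref{prop:comp2} immediately yields
\[
S(P^e)=\delta^{-1}(c_1q^{dj_1})=\delta^{-1}(e\,q^d),
\]
so it only remains to write $\delta^{-1}(eq^d)$ out explicitly. Starting from the $q$-adic expansion $e=\sum_{j=0}^{k}i_jq^j$, multiplication by $q^d$ shifts each digit up by $d$ places, producing the $q$-adic expansion $eq^d=\sum_{j=0}^{k}i_jq^{j+d}$ whose digits in positions $0,1,\dots,d-1$ all vanish. By the explicit formula for $\delta^{-1}$ recorded just after the definition of $\delta$, this gives $\delta^{-1}(eq^d)=a_{i_0}t^d+a_{i_1}t^{d+1}+\cdots+a_{i_k}t^{d+k}$, the $d$ leading zero digits contributing nothing because $a_0=0$. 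This is precisely the asserted formula.

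I expect no genuine obstacle here: the argument is a single size comparison followed by a routine digit shift, with the role of Lemma~\ref{lem:rep} already absorbed into Proposition~\ref{prop:comp2}. The only point deserving attention is a notational collision---the index $k$ in the corollary labels the top $q$-adic digit of $e$, whereas the symbol $k$ in Proposition~\ref{prop:comp2} counts the terms of the $b_j$-representation, which we have just shown equals $1$. In the write-up I would rename one of these to keep the two roles distinct.
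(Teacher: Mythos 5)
Your proposal is correct and follows exactly the paper's route: the paper derives this corollary directly from Proposition~\ref{prop:comp2}, and your argument simply makes explicit the specialization (that $e\le q^d < b_2$ forces the representation $e=c_1b_1$, followed by the digit shift under $\delta^{-1}$) that the paper leaves to the reader. The boundary case $e=q^d$ is also handled correctly by the allowance $c_k\le q^d$ in the proposition.
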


With some more effort we can estimate how the size of a polynomial changes after an action of $S$.
We in fact only need to consider reducible polynomials.

\begin{proposition}  \label{prop:delta-Kf}
Given a polynomial $f \in \A$ with $\deg f \ge 1$,
suppose that $f$ is reducible and $f \ne b(t+c)^2$ for any $b, c \in \F_q$.
Then, we have
$$
\delta(S(f)) \le \frac{\delta(f)}{q},
$$
where the equality holds if and only if $q=2$ or $3$, $f=t^3$.
\end{proposition}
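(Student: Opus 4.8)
The plan is to reduce the whole statement to a single prime power via Proposition~\ref{prop:comp1}, bound $\delta\big(S(\cdot)\big)$ on that prime power, and compare with the cheap lower bound $\delta(f)\ge q^{\deg f}$. Write $f=c\prod_{i=1}^{s}P_i^{e_i}$ with $c\in\F_q^*$, distinct monic irreducibles $P_i$ of degrees $d_i$, and $e_i\ge 1$, and set $n=\deg f=\sum_i d_ie_i$. Since the order on $\A$ is defined through $\delta$, Proposition~\ref{prop:comp1} gives $\delta(S(f))=\max_i \delta(S(P_i^{e_i}))$; I fix an index attaining the maximum and abbreviate its data as $(P,d,e)$, so that $\delta(S(f))=\delta(S(P^e))$. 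I record at the outset the elementary fact that a polynomial $g$ satisfies $\delta(g)=q^{\deg g}$ if and only if $g=t^{\deg g}$, while $\delta(g)\ge q^{\deg g}$ always holds; in particular $\delta(f)\ge q^n$, with equality only for $f=t^n$.

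First I would dispose of the case where $f$ has at least two distinct irreducible factors. Here Proposition~\ref{prop:basic}(2) gives $S(P^e)\le t^{de}$, hence $\delta(S(f))=\delta(S(P^e))\le q^{de}\le q^{n-1}$, the last step because the presence of a second factor forces $n\ge de+1$. Since $f$ is then not a power of $t$, we have $\delta(f)>q^n$, so that $\delta(S(f))\le q^{n-1}<\delta(f)/q$, a strict inequality; thus no equality arises in this case.

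The remaining case is $f=cP^e$ with $e\ge 2$, and the hypothesis $f\ne b(t+c)^2$ says exactly that $(d,e)\ne(1,2)$. The heart of the argument is the estimate $\delta(S(P^e))\le q^{de-1}$ for every irreducible $P$ of degree $d$ and every $e\ge 2$ with $(d,e)\ne(1,2)$. To prove it I would show that $P^e\mid\big(\delta^{-1}(q^{de-1})\big)!$, for then minimality in Definition~\ref{def:kem} yields $S(P^e)\le\delta^{-1}(q^{de-1})$ and hence the bound. By Lemma~\ref{lem:val},
$$
v_P\big((\delta^{-1}(q^{de-1}))!\big)=\sum_{j\ge 1}\left\lfloor\frac{q^{de-1}}{q^{dj}}\right\rfloor=\sum_{l=1}^{e-1}q^{dl-1}=\frac{q^{d-1}(q^{d(e-1)}-1)}{q^d-1},
$$
the terms with $j\ge e$ vanishing, so the estimate reduces to the arithmetic inequality $V(d,e):=\frac{q^{d-1}(q^{d(e-1)}-1)}{q^d-1}\ge e$. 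This is the step I expect to be the main obstacle, though a mild one: it splits into the boundary ranges $d=1$ and $e\le 3$, handled directly, and the generic range, where $V(d,e)\ge q^{d(e-1)-1}\ge 2^{e-1}\ge e$ makes it immediate. The only failure is precisely $(d,e)=(1,2)$, where $V=1<2$, which is exactly why $b(t+c)^2$ must be excluded.

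Granting the estimate, $\delta(S(f))=\delta(S(P^e))\le q^{de-1}=q^{n-1}\le\delta(f)/q$, the desired inequality. For the equality analysis, chaining $\delta(S(f))\le q^{n-1}\le\delta(f)/q$ shows that equality forces both $\delta(S(f))=q^{n-1}$ and $\delta(f)=q^n$, the latter giving $f=t^n$, whence $P=t$, $d=1$ and $e=n\ge 3$. It then remains to compute $\delta(S(t^n))$ from Proposition~\ref{prop:comp2}: writing $\delta(S(t^n))=(q-1)n+\sigma$ with $\sigma$ the digit-sum occurring there, one checks that $(q-1)n+\sigma=q^{n-1}$ holds exactly for $n=3$, $q\in\{2,3\}$, and is strict otherwise (for $n\ge 4$ the left side is linear in $n$ while the right side is exponential). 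This pins down the equality case as $q\in\{2,3\}$, $f=t^3$, completing the proof.
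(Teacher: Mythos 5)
Your proof is correct, and it takes a genuinely different route from the paper's. The paper makes the same initial reduction to a prime power $f=P^e$ (via Propositions~\ref{prop:basic} and~\ref{prop:comp1}), but then runs the entire argument through the representation of $e$ from Proposition~\ref{prop:comp2}: it splits into $\deg P\ge 2$ versus $\deg P=1$ and, within each case, does a sub-analysis on the leading index $j_1$ of that representation, playing the bound $\delta(S(P^e))\le q^{d(j_1+1)}$ against $\delta(f)/q^d > q^{d(e-1)}$ by means of lower bounds for $e$ in terms of $j_1$. You instead prove the single uniform estimate $S(P^e)\le t^{de-1}$ (note that $\delta^{-1}(q^{de-1})=t^{de-1}$) for all $e\ge 2$ with $(d,e)\ne(1,2)$, by computing $v_P(t^{de-1}!)$ directly from Lemma~\ref{lem:val}; this bypasses the representation of $e$ entirely for the inequality, and it makes transparent exactly why $b(t+c)^2$ must be excluded: that is precisely where $v_P(t^{de-1}!)=1<e$. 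Proposition~\ref{prop:comp2} enters your argument only at the equality stage, where $\delta(f)=q^n$ forces $f=t^n$ and the identity $\delta(S(t^n))=(q-1)n+\sigma$ isolates $q\in\{2,3\}$, $n=3$. Your route is shorter and cleaner for the inequality itself, and it also dispenses with the paper's reduction to monic $f$ (your observation that $\delta(g)\ge q^{\deg g}$ with equality only for $g=t^{\deg g}$ absorbs the unit). What the paper's finer case analysis buys is extra precision: it yields the subsequent Corollary that $\delta(S(P^e))<\delta(P^e)/q^{\deg P}$ for $\deg P\ge 2$, $e\ge 3$ (a saving of $q^{\deg P}$ rather than $q$), which your uniform bound $q^{de-1}$ is too weak to recover. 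One point you should make explicit: in the equality analysis, the ``linear versus exponential'' remark alone does not settle $q=2$, $n=4$, since there the crude bound $\sigma\le n$ only gives $(q-1)n+\sigma\le qn=8=q^{n-1}$, which is not strict; you need the actual representation $4=b_2+b_1$, hence $\sigma=2$ and $\delta(S(t^4))=6<8$. That verification is routine and covered by your ``one checks'', but it is the one boundary case your stated heuristic misses.
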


\begin{proof}
Without loss of generality, we can assume that $f$ is monic.
When $f$ has at least two distinct monic irreducible factors,
by Proposition~\ref{prop:basic} (2) and Proposition~\ref{prop:comp1},
we immediately have
\begin{equation*}
\delta(S(f)) < \frac{\delta(f)}{q}.
\end{equation*}
So, it remains to consider the following two cases:
\begin{itemize}
\item $f=P^e, e \ge 2$ for a monic irreducible polynomial $P$ with $\deg P \ge 2$;

\item $f=P^e, e \ge 3$ for a monic linear polynomial $P$.
\end{itemize}

We first handle the first case.
That is,  we assume that $f=P^e, e \ge 2$ for a monic irreducible polynomial $P$ with $\deg P \ge 2$.

Let $d= \deg P$, and define $b_j = \frac{q^{dj}-1}{q^d - 1}, j \in \N^*$.
As before, $e$ can be uniquely written as
\begin{equation}  \label{eq:ecb}
e = c_{1} b_{j_1} + c_{2} b_{j_2} + \cdots + c_{k} b_{j_k},
\end{equation}
where $ j_1>j_2>\cdots>j_k>0 $
and $1\leq c_i < q^{d}$ for $i = 1,2,\ldots,k-1, 1\leq c_k\leq q^{d}$.
By Proposition~\ref{prop:comp2}, we have
\begin{equation}  \label{eq:Kcq}
\delta(S(f)) = c_{1} q^{dj_1} + c_{2} q^{dj_2} + \cdots + c_{k} q^{dj_k} \le q^{d(j_1 +1)}.
\end{equation}

If  $j_1 \ge 2$, then (using $d \ge 2$)
$$
e \ge  c_{1} b_{j_1} \ge b_{j_1} \ge 1 + 2^2 + \cdots + 2^{2(j_1 -1)} \ge j_1 + 3 ,
$$
and thus
$$
\frac{\delta(f)}{q^d} > \frac{q^{de}}{q^d} = q^{d(e -1)} \ge q^{d(j_1 +2)} > q^{d(j_1+1)} \ge \delta(S(f)).
$$

If $j_1 =1$,  then $e =c_1b_1 = c_1 \le q^d$ and $\delta(S(f)) = eq^{d}$, and so for $e \ge 3$,
$$
\frac{\delta(f)}{q^d} > q^{d(e-1)} \ge q^{2d} \ge eq^d = \delta(S(f)).
$$
For $e=2$,
$$
\frac{\delta(f)}{q} > \frac{q^{2d}}{q} = q^{2d-1} \ge 2q^d = \delta(S(f)).
$$
This completes the proof for the first case.

We now handle the second case.
That is, we assume that $f=P^e, e \ge 3$ for a monic linear polynomial $P$.
This means that in \eqref{eq:ecb} and \eqref{eq:Kcq} $d=1$.

If  $j_1 \ge 3$, then
$$
e \ge c_{1} b_{j_1} \ge b_{j_1} \ge 1 + 2 + \cdots + 2^{j_1 -1} \ge  j_1 + 4 ,
$$
and so
$$
\frac{\delta(f)}{q} \ge q^{e-1} \ge q^{j_1 + 3}  > q^{j_1 + 1} \ge \delta(S(f)).
$$

If $j_1 =2$, then for $e \ge 5$, we already have $e \ge j_1 +3$, and so
$$
\frac{\delta(f)}{q} \ge q^{e-1} \ge q^{j_1+2}   > q^{j_1 + 1} \ge \delta(S(f)).
$$
For $e=4$, we have either $q=2, e=b_1 + b_2, \delta(S(f)) =6$ or $q=3, e=b_2,\delta(S(f))=9$,
and then we still obtain
$$
\frac{\delta(f)}{q} \ge q^3  >  \delta(S(f)).
$$
For $e=3$, we must have $q=2, e= b_2, \delta(S(f)) =4$,
and so
$$
\frac{\delta(f)}{q}  \ge \frac{2^3}{2}  = 4 = \delta(S(f)),
$$
where the equality holds if and only if $f=t^3$.

If $j_1 =1$, then $e=c_1b_1 =c_1 \le q, \delta(S(f)) = eq$, and thus (using $e \ge 3$)
$$
\frac{\delta(f)}{q}  \ge    q^{e-1} \ge q^2 \ge eq = \delta(S(f)),
$$
where  the equalities hold if and only if $q=3, f=t^3$.
This completes the proof.
\end{proof}

In the above proof, we in fact have proved the following result.

\begin{corollary}
For any irreducible polynomial $P \in \A$ with $\deg P \ge 2$ and any integer $e \ge 3$, we have
$$
\delta(S(P^e)) < \frac{\delta(P^e)}{q^{\deg P}}.
$$
\end{corollary}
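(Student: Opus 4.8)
The plan is to read the result directly off the proof of Proposition~\ref{prop:delta-Kf}, since a power $P^e$ of an irreducible $P$ with $\deg P = d \ge 2$ is precisely the first of the two cases treated there. I would begin by reducing everything to a single clean target. Because $P^e$ has degree $de$ with nonzero leading coefficient, the top term of $\delta(P^e)$ is $i_{de}q^{de}$ with $i_{de} \ge 1$, so $\delta(P^e) \ge q^{de}$ for every irreducible $P$ (monic or not); hence it suffices to prove the sharper statement
\[
\delta(S(P^e)) < q^{d(e-1)} \le \frac{\delta(P^e)}{q^d}.
\]
This reformulation is convenient because the quantity $q^{d(e-1)}$ no longer depends on the individual coefficients of $P$. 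Moreover, by Proposition~\ref{prop:comp2} the value $\delta(S(P^e))$ depends only on $d$ and $e$ (consistently with $S(P^e)=S(aP^e)$ from Proposition~\ref{prop:basic}(1)), so I need not treat non-monic $P$ separately.

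Next I would import from Proposition~\ref{prop:comp2} the representation $e = c_1 b_{j_1} + \cdots + c_k b_{j_k}$ with $b_j = (q^{dj}-1)/(q^d-1)$, together with the bound $\delta(S(P^e)) = c_1 q^{dj_1} + \cdots + c_k q^{dj_k} \le q^{d(j_1+1)}$, and split on the value of $j_1$ exactly as in the proposition. When $j_1 \ge 2$, the inequality $e \ge j_1 + 3$ proved there (it uses $d \ge 2$) gives $e-1 \ge j_1+2 > j_1+1$, whence $\delta(S(P^e)) \le q^{d(j_1+1)} < q^{d(e-1)}$. When $j_1 = 1$, one has $e = c_1 \le q^d$ and $\delta(S(P^e)) = e q^d$, and here the hypothesis $e \ge 3$ combines with $q^d \ge 4$ to give $e q^d < q^{d(e-1)}$ (equivalently $e < q^{d(e-2)}$). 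In both cases $\delta(S(P^e)) < q^{d(e-1)}$, which is the target, and the corollary follows.

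The only genuinely delicate point, and what I would flag as the crux, is the necessity of the hypothesis $e \ge 3$. In the proof of Proposition~\ref{prop:delta-Kf} the subcase $j_1 = 1,\, e = 2$ was dispatched only by dividing $\delta(f)$ by $q$ rather than by $q^d$, and for $e = 2$ the stronger bound genuinely fails: taking $q = 3$ and $P = t^2 + 1$, one finds $\delta(S(P^2)) = 2q^d = 18$ whereas $\delta(P^2)/q^d = 100/9 < 18$. Thus the entire content of the corollary is that discarding $e = 2$ removes the single obstruction, after which all the remaining estimates already present in the proposition are strict with the full factor $q^d$; no computation beyond reorganizing those estimates into the display above is required.
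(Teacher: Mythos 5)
Your proof is correct and takes essentially the same route as the paper: the paper's ``proof'' of this corollary is just the observation that the first case of the proof of Proposition~\ref{prop:delta-Kf} (the subcases $j_1 \ge 2$, and $j_1 = 1$ with $e \ge 3$) already yields the strict bound with denominator $q^{\deg P}$, and your argument is that same case analysis reorganized around the single target $\delta(S(P^e)) < q^{d(e-1)}$. Your $q=3$, $P=t^2+1$ counterexample confirming that $e=2$ must be excluded is a nice sanity check, though not needed for the proof.
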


\subsection{Values of the Smarandache function}

Here, we consider the value set and the inverse image sets of the Smarandache function $S$.

\begin{proposition}\label{prop:val-range}
$ S(\A)= t\A $.
\end{proposition}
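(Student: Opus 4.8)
The claim is the set equality $S(\A)=t\A$, which I would prove by establishing the two inclusions separately. The plan is to first show $S(\A)\subseteq t\A$, i.e.\ that every value of $S$ is divisible by $t$, and then show the reverse inclusion $t\A\subseteq S(\A)$ by exhibiting, for each monic $g$, a polynomial whose Smarandache value is exactly $tg$.

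For the inclusion $S(\A)\subseteq t\A$, the key observation is that by Proposition~\ref{prop:comp1} together with Proposition~\ref{prop:comp2}, every value $S(f)$ for $f$ non-zero reduces to a value of the form $S(P^e)$ for some irreducible $P$ of degree $d\ge 1$. By Proposition~\ref{prop:comp2}, such a value is $\delta^{-1}(c_1 q^{dj_1}+\cdots+c_k q^{dj_k})$ with all exponents $dj_i \ge d \ge 1$. Reading off the $q$-adic digits, this says the constant term (the digit in the $q^0$ place) of $S(P^e)$ is zero, so $t\mid S(P^e)$. Since $S(f)$ is one of these values, $t\mid S(f)$, giving $S(f)\in t\A$. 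One should also check the edge case $S(0)=0\in t\A$, which is immediate by convention.

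For the reverse inclusion $t\A\subseteq S(\A)$, I would argue that every monic polynomial of the form $t\cdot g$ is hit; by Proposition~\ref{prop:basic}~(1) it suffices to produce monic targets, since scaling by $\F_q^*$ does not change $S$. The natural candidates come from Proposition~\ref{prop:basic}~(3): for an irreducible $P$ of degree $d$ we have $S(P)=t^d$, which already lands every pure power $t^d$ in the range. To reach a general element $tg\in t\A$, I would use the corollary to Proposition~\ref{prop:comp2}: for $e\le q^d$ with $q$-adic expansion $e=\sum_{j=0}^{k}i_j q^j$, one gets $S(P^e)=a_{i_0}t^d+a_{i_1}t^{d+1}+\cdots+a_{i_k}t^{d+k}$, so by choosing $P$ linear ($d=1$) and varying $e$ through $1\le e\le q$ one realizes all polynomials $t\cdot\delta^{-1}(e)$ with appropriate digits; combining these building blocks with higher powers should sweep out all of $t\A$.

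The main obstacle will be the surjectivity direction: showing that \emph{every} element of $t\A$ is attained, not merely a convenient subfamily. The explicit formulas of Proposition~\ref{prop:comp2} describe exactly which polynomials arise as $S(P^e)$, and the hard part is verifying that as $P$ ranges over irreducibles and $e$ over positive integers these exhaust all polynomials with zero constant term. I expect the cleanest route is to fix a target $th$ with $h$ monic of degree $m$, write down its $\delta$-value, and then invert the map $e\mapsto \delta(S(P^e))$ from Proposition~\ref{prop:comp2} with a suitably chosen irreducible $P$ (for instance of degree $1$), solving for the coefficients $c_i$ and indices $j_i$; the combinatorial content is precisely that the representation in Lemma~\ref{lem:rep} can be prescribed to match any desired digit pattern shifted away from the constant term.
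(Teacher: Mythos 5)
Your first inclusion $S(\A)\subseteq t\A$ is fine and is exactly the paper's argument: by Propositions~\ref{prop:comp1} and \ref{prop:comp2} every nonzero value of $S$ equals some $S(P^e)$, whose $\delta$-value $c_1q^{dj_1}+\cdots+c_kq^{dj_k}$ has all $q$-adic digits in positions $dj_i\ge d\ge 1$, so the constant term vanishes; together with $S(b)=0$ for $b\in\F_q$ this gives the inclusion.

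The gap is in the surjectivity direction, which is the substance of the proposition: you never actually exhibit a preimage of a general $f\in t\A$. Your middle paragraph is a dead end: with $d=1$ and $1\le e\le q$ the corollary to Proposition~\ref{prop:comp2} only produces the polynomials $a_e t$ and $t^2$, and there is no mechanism for ``combining these building blocks'' --- $S$ is neither additive nor multiplicative in any useful sense (Proposition~\ref{prop:comp1} only gives a maximum over coprime prime powers), so preimages of pieces cannot be assembled into a preimage of their sum or product. Your final paragraph does identify the correct route (invert $e\mapsto\delta(S(P^e))$ with $\deg P=1$), but you leave the inversion as an expectation (``I expect,'' ``should sweep out'') rather than carrying it out. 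It is in fact a one-line construction, and it is precisely the paper's proof: given $f=a_{i_1}t+a_{i_2}t^2+\cdots+a_{i_k}t^k\in t\A$, set $e=i_1b_1+i_2b_2+\cdots+i_kb_k$ with $b_j=\frac{q^j-1}{q-1}$. After discarding the zero terms, this is a valid representation in the sense of Lemma~\ref{lem:rep}, because every nonzero $q$-adic digit satisfies $1\le i_j\le q-1<q$; hence Proposition~\ref{prop:comp2} applies with $P=t$ and gives $\delta(S(t^e))=i_1q+i_2q^2+\cdots+i_kq^k=\delta(f)$, i.e.\ $S(t^e)=f$. Adding this explicit choice of $e$ (and the digit-constraint check) closes the gap and turns your outline into the paper's proof.
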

\begin{proof}
By Propositions~\ref{prop:comp1} and \ref{prop:comp2}, it is easy to see that $S(\A)\subseteq t\A$
(note that $S(b)=0$ for any $b \in \F_q$).
On the other hand, for any $f=a_{i_1}t + a_{i_2} t^2 + \cdots + a_{i_k} t^{k} \in t\A$,
we take  $e=i_{1}b_{1}+i_{2}b_{2}+\cdots+i_{k}b_{k}$,
where $b_{j}=\frac{q^{j}-1}{q-1}, j\in\N^{*}$.
 Then by Proposition~\ref{prop:comp2}, we have $S(t^{e})=f$, and so $t\A\subseteq S(\A)$.
 Thus $ S(\A)= t\A $.
\end{proof}

We have seen that the Smarandache function $S$ is not injective; see Proposition~\ref{prop:basic} (1).
For any non-zero polynomial $ f\in t\A$, denote by $S^{-1}(f)$ the inverse image set of $f$.
We now determine all the powers of irreducible polynomials contained in $S^{-1}(f)$.

\begin{proposition}\label{prop:inverse}
 Given a non-zero polynomial $f\in t\A$ and an integer $d\in\N^*$,
suppose that $q^d \mid \delta(f)$.
Then, $\delta(f)$ is uniquely represented as
\begin{equation*}
\delta(f)=c_{1}q^{dj_{1}}+c_{2}q^{dj_{2}}+\cdots+c_{k}q^{dj_{k}},
\end{equation*}
with $j_1>j_2>\cdots>j_k>0,1\leq c_i < q^{d}$ for $i =1,2,\ldots,k$.
Put $b_{j}=\frac{q^{dj}-1}{q^{d}-1}, j\in\N^*$
and
\begin{equation*}
e_{0}=c_{1}b_{j_{1}}+c_{2}b_{j_{2}}+\cdots+c_{k}b_{j_{k}}.
\end{equation*}
 Then $S^{-1}(f)$ contains the subset
$$
\{P^e: \, \textrm{$P\in\A$ is irreducible of degree $d$},\ e\in[ e_{0}-(j_{k}-1),e_{0}]\cap\N \}.
$$
In particular, when exhausting all the positive integers $d$ satisfying $q^d \mid \delta(f)$,
we obtain all the powers of irreducible polynomials contained in $S^{-1}(f)$.
\end{proposition}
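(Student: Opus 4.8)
The plan is to reduce the whole statement to a single monotone step function and read off $S^{-1}(f)$ from its jumps. Write $m = \delta(f)$. The first assertion is nothing but the base-$q^d$ expansion of $m$: listing only the nonzero digits gives the unique representation $m = c_1 q^{dj_1} + \cdots + c_k q^{dj_k}$ with $1 \le c_i < q^d$, and the hypothesis $q^d \mid m$ forces the lowest occupied position to satisfy $j_k \ge 1$, i.e. $j_k > 0$.

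Next I would fix a degree $d$ and introduce the function $V(\ell) = \sum_{j \ge 1} \lfloor \ell/q^{dj} \rfloor$. By Lemma~\ref{lem:val}, $V(\delta(h)) = v_P(h!)$ for every irreducible $P$ of degree $d$ and every nonzero $h$; the crucial point is that $V$ depends only on $d$, not on the particular $P$, so the analysis is uniform over all irreducible polynomials of degree $d$. Since $V$ is nondecreasing, the minimality in Definition~\ref{def:kem}, together with the divisibility supplied by Corollary~\ref{cor:div}, yields the clean equivalence: for $\deg P = d$ and $e \ge 1$, one has $S(P^e) = f$ if and only if $V(m-1) < e \le V(m)$. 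Indeed, $P^e \mid f!$ is precisely $e \le V(m)$; and minimality requires $P^e \nmid h!$ for every $h < f$, where by Corollary~\ref{cor:div} it suffices to test the immediate predecessor $g = \delta^{-1}(m-1)$ (any $h \le g$ has $h! \mid g!$), giving $e > V(m-1)$.

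It then remains to evaluate the two endpoints. For the top, the Legendre-type identity $V(\ell) = (\ell - s)/(q^d-1)$, where $s$ is the base-$q^d$ digit sum of $\ell$, gives $V(m) = (m - \sum_i c_i)/(q^d-1) = \sum_i c_i b_{j_i} = e_0$. For the size of the jump I would use $V(m) - V(m-1) = \#\{j \ge 1 : q^{dj} \mid m\}$, which follows termwise from the fact that $\lfloor m/q^{dj}\rfloor - \lfloor (m-1)/q^{dj}\rfloor$ equals $1$ exactly when $q^{dj} \mid m$ and $0$ otherwise; since the lowest nonzero base-$q^d$ digit of $m$ sits at position $j_k$, this count is precisely $j_k$. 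Hence $V(m-1) = e_0 - j_k$, and the equivalence above becomes $e_0 - (j_k - 1) \le e \le e_0$, which is the claimed interval.

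Finally, for the ``in particular'' clause I would run the same equivalence backwards for an arbitrary prime power $P^e \in S^{-1}(f)$ with $\deg P = d$ (equivalently, read it off Proposition~\ref{prop:comp2}): for some $e$ to satisfy $V(m-1) < e \le V(m)$ the jump $V(m) - V(m-1) = j_k$ must be positive, which forces $q^{d} \mid m$, since $j_k = 0$ yields a zero jump and no admissible $e$. Thus every power of an irreducible polynomial lying in $S^{-1}(f)$ comes from a degree $d$ with $q^d \mid \delta(f)$ together with an exponent in the associated interval, so exhausting all such $d$ captures all of them. The main obstacle is the pair of floor-function identities in the third paragraph—that the jump equals $j_k$ and that $V(m) = e_0$—and the careful use of Corollary~\ref{cor:div} to certify that testing the single predecessor $\delta^{-1}(m-1)$ already rules out every smaller polynomial.
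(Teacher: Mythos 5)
Your proof is correct, but it takes a genuinely different route from the paper's. The paper deduces everything from Proposition~\ref{prop:comp2}: for $e_0$ the canonical representation $e_0=\sum_i c_i b_{j_i}$ feeds directly into the formula there, giving $S(P^{e_0})=f$; for $e=e_0-i$ with $1\le i\le j_k-1$ the paper explicitly rewrites $e$ in its unique Lemma~\ref{lem:rep} form, with trailing coefficients $q^d-1,\dots,q^d-1,q^d$ at positions $j_k-1,\dots,j_k-i$, and checks that after the telescoping carries the comp2 formula again returns $\delta(f)$; the completeness clause is then asserted rather tersely as a consequence of the same proposition. You instead bypass Proposition~\ref{prop:comp2} entirely and work one level lower, from Definition~\ref{def:kem}, Lemma~\ref{lem:val} and Corollary~\ref{cor:div}: the monotone function $V(\ell)=\sum_{j\ge 1}\lfloor \ell/q^{dj}\rfloor$ gives the clean two-sided characterization $S(P^e)=f \iff V(m-1)<e\le V(m)$, and the two endpoint evaluations (the Legendre-type identity $V(m)=(m-\sum_i c_i)/(q^d-1)=e_0$ and the jump count $V(m)-V(m-1)=\#\{j\ge 1: q^{dj}\mid m\}=j_k$) then produce the interval at once. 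What each approach buys: the paper's argument is shorter given that Proposition~\ref{prop:comp2} is already available, but it requires the somewhat fiddly digit/carry manipulation and leaves the ``in particular'' clause implicit; your fiber characterization handles inclusion and completeness in a single stroke (a positive jump forces $q^d\mid m$, and conversely), so the exhaustion claim comes out more rigorously, at the cost of re-deriving valuation identities that the paper has effectively packaged inside the proof of Proposition~\ref{prop:comp2}.
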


\begin{proof}
Suppose that $P\in\A$ is an irreducible polynomial of degree $d$.
By Proposition~\ref{prop:comp2}, we directly have $\delta(S(P^{e_{0}}))=\delta(f)$, and so $S(P^{e_{0}})=f$.
When $j_{k}\geq 2$ and $e\in[ e_{0}-(j_{k}-1),e_{0})\cap\N $, without loss of generality, we take $e=e_{0}-i, 1\leq i \leq j_{k}-1$,
then $e$ is uniquely represented in the form:
\begin{align*}
e=c_{1}b_{j_{1}}+\cdots   & + c_{k-1} b_{j_{k-1}} + (c_{k}-1)b_{j_{k}} \\
& +(q^{d}-1)b_{j_{k}-1}+\cdots+(q^{d}-1)b_{j_{k}-(i-1)}+q^{d}b_{j_{k}-i}.
\end{align*}
By Proposition~\ref{prop:comp2}, we have $S(P^{e})=f$.
This in fact completes the proof.
\end{proof}

From Proposition~\ref{prop:inverse}, one can guess that the Smarandache function $S$ is not an increasing function.
We confirm this by the following result.

\begin{proposition}
For any irreducible polynomials $P, Q \in \A$ with $ \deg Q > 1 + \deg P$,
there exist positive integers $e_{1}$ and $e_{2}$ such that
$P^{e_{1}}>Q^{e_{2}}$ but $S(P^{e_{1}})<S(Q^{e_{2}})$.
\end{proposition}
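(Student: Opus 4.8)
The plan is to exhibit the exponents explicitly. Write $d_1=\deg P$ and $d_2=\deg Q$, so that the hypothesis becomes $d_2\ge d_1+2$, and recall that the order on $\A$ refines the degree: since $\delta(g)\ge q^{\deg g}$ and $\delta(h)\le q^{\deg h+1}-1$ for all non-zero $g,h\in\A$, we have $\deg g>\deg h\Rightarrow g>h$. I would take $e_2=1$, so that $Q^{e_2}=Q$ and, by Proposition~\ref{prop:basic}(3), $S(Q^{e_2})=t^{d_2}$ with $\delta(S(Q^{e_2}))=q^{d_2}$. For the other exponent I would set $e_1=\lfloor d_2/d_1\rfloor+1$, the least integer strictly greater than $d_2/d_1$, so that $\deg(P^{e_1})=d_1e_1>d_2=\deg Q$. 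By the displayed implication this already yields $P^{e_1}>Q^{e_2}$, disposing of the first required inequality; everything then reduces to proving $S(P^{e_1})<t^{d_2}$, i.e.\ $\delta(S(P^{e_1}))<q^{d_2}$.

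To control $\delta(S(P^{e_1}))$ I would use the explicit description in Proposition~\ref{prop:comp2}. Writing $e_1=c_1b_{j_1}+\cdots+c_kb_{j_k}$ with $b_j=(q^{d_1j}-1)/(q^{d_1}-1)$, the computation in its proof gives $\delta(S(P^{e_1}))=(q^{d_1}-1)e_1+\sigma$, where $\sigma=c_1+\cdots+c_k$. Since each $b_{j}\ge 1$, with equality only for $j=1$, we have $\sigma\le e_1$, hence the uniform estimate $\delta(S(P^{e_1}))\le q^{d_1}e_1$, which is an equality precisely when $e_1\le q^{d_1}$ (the content of the corollary following Proposition~\ref{prop:comp2}). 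Crucially, when $e_1>q^{d_1}$ one has $e_1\ge b_2$ and so $j_1\ge 2$; then the single term $c_1(b_{j_1}-1)\ge b_2-1=q^{d_1}$ shows $e_1-\sigma\ge q^{d_1}$, giving the sharpened bound $\delta(S(P^{e_1}))\le q^{d_1}(e_1-1)$.

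With these two estimates the verification splits on the size of $e_1$. If $e_1\le q^{d_1}$, then $\delta(S(P^{e_1}))=q^{d_1}e_1$ and it suffices to check $e_1<q^{d_2-d_1}$: when $d_2\le 2d_1$ we have $e_1\le 3<4\le q^{d_2-d_1}$, and when $d_2>2d_1$ we have $e_1\le q^{d_1}<q^{d_2-d_1}$. If instead $e_1>q^{d_1}$, then $\delta(S(P^{e_1}))\le q^{d_1}(e_1-1)$ and it suffices to check $e_1\le q^{d_2-d_1}$; putting $\ell=d_2-d_1\ge 2$ and using $e_1\le d_2/d_1+1=2+\ell/d_1\le 2+\ell$, this follows from the elementary inequality $2+\ell\le 2^{\ell}\le q^{\ell}$ valid for $\ell\ge 2$. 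In either subcase $\delta(S(P^{e_1}))<q^{d_2}=\delta(S(Q^{e_2}))$, which completes the plan.

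The delicate point, and the reason the hypothesis $d_2\ge d_1+2$ is sharp, is the borderline regime in which $d_2$ barely exceeds $d_1$. Already for $(q,d_1,d_2)=(2,1,3)$ the forced value $e_1=4$ makes the crude bound $q^{d_1}e_1=8=q^{d_2}$ an equality, so one genuinely needs the improvement $\delta(S(P^{e}))\le q^{d_1}(e-1)$ for $e>q^{d_1}$; this is exactly the size-shrinking behaviour of $S$ quantified in Proposition~\ref{prop:delta-Kf}. Accordingly, I expect the only real work to lie in extracting the sharpened estimate from the digit sum $\sigma$ — that is, in passing from $\delta(S(P^{e_1}))\le q^{d_1}e_1$ to $\delta(S(P^{e_1}))\le q^{d_1}(e_1-1)$ — while the remaining inequalities among $d_1,d_2,q$ are routine.
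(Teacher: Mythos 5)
Your proof is correct, and it takes a genuinely different route from the paper's. The paper divides into cases according to the remainder $r$ in $d_2 = kd_1 + r$: for $r \neq 0$ it chooses $e_2 = 1$ and $e_1 = b_1 + q^{r-1}b_k$, while for $r = 0$ it chooses $e_2 = 2$ and $e_1 = b_1 + b_{k-1} + b_k$; in each case $e_1$ is specified directly by its representation in the base $\{b_j\}$, so Proposition~\ref{prop:comp2} yields the exact values $\delta(S(P^{e_1})) = q^{d_1} + q^{d_2-1}$, resp.\ $q^{d_1} + q^{d_2-d_1} + q^{d_2}$, and both required inequalities are read off without estimation. You instead fix $e_2 = 1$ in all cases, take the smallest $e_1$ with $d_1 e_1 > d_2$, and bound $\delta(S(P^{e_1}))$ from above via the identity $\delta(S(P^{e})) = (q^{d_1}-1)e + \sigma$ (with $\sigma$ the digit sum of $e$ in the base $\{b_j\}$), together with its refinement $\delta(S(P^{e})) \le q^{d_1}(e-1)$ for $e > q^{d_1}$; both facts do follow from the proof of Proposition~\ref{prop:comp2}, and your case analysis ($e_1 \le q^{d_1}$ versus $e_1 > q^{d_1}$, then $d_2 \le 2d_1$ versus $d_2 > 2d_1$) checks out in full, including the borderline instance $(q,d_1,d_2)=(2,1,3)$ where the crude bound $q^{d_1}e_1$ equals $q^{d_2}$ exactly and the refinement is genuinely needed. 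What your route buys is uniformity: no case split on whether $d_1 \mid d_2$, no need to pass to $e_2 = 2$, and the extra information that the minimal degree-dominating power of $P$ already witnesses the failure of monotonicity. What the paper's route buys is the absence of any inequalities to verify, since $\delta(S(P^{e_1}))$ is computed exactly; the price is bespoke exponents and the forced switch to $e_2 = 2$ when $d_1 \mid d_2$, because in that case any representation of $e_1$ whose leading term is $b_k$ contributes a summand at least $q^{d_1 k} = q^{d_2}$ to $\delta(S(P^{e_1}))$, ruling out a strict comparison with $\delta(S(Q)) = q^{d_2}$.
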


\begin{proof}
For simplicity, denote $d_{1} = \deg P$ and $d_{2} = \deg Q$, and put $b_{j}=\frac{q^{jd_{1}}-1}{q^{d_{1}}-1},j\in\N^* $.
Since $1\leq d_{1}<d_{2}$, by the division algorithm, there exist $k,r\in\N$ such that
$$
d_{2}= kd_{1}+r,
$$
where $k \geq 1$ and $ 0\leq r < d_{1}$.
By assumption, we have  $d_2 - d_1 \ge 2$.

We first assume $r \neq 0$.
Take $e_{1}= b_1 + q^{r-1}b_{k}$ and $e_{2}=1$, then
$e_{1} \geq k +1$. So, using Proposition~\ref{prop:comp2} we have
$$
\delta(P^{e_{1}})\geq q^{d_{1}e_{1}}\geq q^{d_{1}(k +1)}=q^{d_{2}+d_{1}-r}\geq q^{d_{2}+1}>\delta(Q^{e_{2}})
$$
and
$$
\delta(S(P^{e_{1}}))= q^{d_1} + q^{r-1}q^{kd_{1}}=q^{d_1} + q^{d_2 - 1} < q^{d_{2}} = \delta(S(Q^{e_{2}})).
$$
Hence, $P^{e_{1}}>Q^{e_{2}}$ but $S(P^{e_{1}})<S(Q^{e_{2}})$.

We now assume  $r=0$.  Then $k \geq 2$.
 We take $e_{1}= b_1+b_{k - 1} + b_{k}$ and $e_{2}=2$, then $e_{1} \geq 2k + 1$.
So, using Proposition~\ref{prop:comp2} we deduce that
$$
\delta(P^{e_{1}})\geq q^{d_{1}e_{1}}\geq q^{d_{1}(2k +1)}=q^{2d_{2}+d_{1}}\geq q^{2d_{2}+1}>\delta(Q^{e_{2}})
$$
and
$$
\delta(S(P^{e_{1}}))=q^{d_{1}} + q^{d_{1}(k-1)} + q^{kd_{1}}=q^{d_{1}} + q^{d_{2}-d_{1}} + q^{d_{2}}< 2q^{d_{2}} = \delta(S(Q^{e_{2}})).
$$
Hence, $P^{e_{1}}>Q^{e_{2}}$ but $S(P^{e_{1}})<S(Q^{e_{2}})$.
This completes the proof.
\end{proof}

We remark that by Proposition~\ref{prop:comp2}, for any irreducible polynomials $P, Q \in \A$ and any positive integer $e$,
if $\deg P=\deg Q$, then $S(P^{e})=S(Q^{e})$.

\subsection{Fixed points}

For any $f\in \A$, if $S(f)=f$, then we call $f$ a fixed point of $S$.
We first determine the fixed points of the Smarandache function $S$.

\begin{proposition}
Given a non-zero polynomial $f \in \A$,
$ f$ is a fixed point of the Smarandache function $S$ if and only if
\begin{equation*}
\begin{split}
f
& = \left\{\begin{array}{ll}
t,  & \textrm{if $q>2$,}\\
\textrm{$t$ or $t^2$},  & \textrm{if $q=2$.}
\end{array}
\right.
\end{split}
\end{equation*}
\end{proposition}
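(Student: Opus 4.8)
The plan is to handle sufficiency in one line each and spend the real effort on necessity. For sufficiency I would note that $t$ is irreducible, so Proposition~\ref{prop:basic}(3) gives $S(t)=t^{1}=t$ for every $q$; and for $q=2$ I would apply Proposition~\ref{prop:comp2} to $t^{2}=P^{2}$ with $P=t$, $d=1$, $e=2$: here $2=2b_{1}$ forces $\delta(S(t^{2}))=2\cdot 2=4=q^{2}$, so $S(t^{2})=t^{2}$. Hence the listed polynomials are genuinely fixed.

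The crux of necessity is to show first that every fixed point is a pure power of $t$. Suppose $S(f)=f$ with $f\neq 0$. Since $f\in S(\A)=t\A$ by Proposition~\ref{prop:val-range}, we have $\deg f\geq 1$, so Proposition~\ref{prop:basic}(2) applies and gives $f=S(f)\leq t^{\deg f}$, that is $\delta(f)\leq q^{\deg f}$. Writing $f$ in the form~\eqref{eq:f} with $n=\deg f$ and leading index $i_{n}\geq 1$ yields the opposite bound $\delta(f)\geq i_{n}q^{n}\geq q^{n}$. Combining the two forces $\delta(f)=q^{n}$, hence $i_{n}=1$ with all lower indices zero, i.e. $f=t^{n}$.

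It then remains to decide for which $n\geq 1$ one has $S(t^{n})=t^{n}$, equivalently $\delta(S(t^{n}))=q^{n}$. The case $n=1$ is covered above. For $n\geq 3$ the polynomial $t^{n}$ is reducible and not of the form $b(t+c)^{2}$, so Proposition~\ref{prop:delta-Kf} gives $\delta(S(t^{n}))\leq \delta(t^{n})/q=q^{n-1}<q^{n}$, excluding a fixed point; note that even the equality case $f=t^{3}$ there leaves $\delta(S(t^{3}))=q^{2}<q^{3}$. The only borderline case is $n=2$, which I would settle by a direct computation from Proposition~\ref{prop:comp2}: writing $2=2b_{1}$ gives $\delta(S(t^{2}))=2q$, which equals $q^{2}=\delta(t^{2})$ exactly when $q=2$, while for $q\geq 3$ one finds $S(t^{2})=a_{2}t\neq t^{2}$. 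Assembling the cases reproduces the stated classification.

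I expect the main obstacle to be the reduction in the second paragraph rather than the case analysis: the point is to turn the soft bound $S(f)\leq t^{\deg f}$ into the rigid conclusion $f=t^{n}$ by playing it against the leading-coefficient lower bound for $\delta(f)$. Once $f$ is known to be a power of $t$, Proposition~\ref{prop:delta-Kf} does almost all the remaining work, and only the single value $n=2$ needs the explicit split between $q=2$ and $q\geq 3$ that is responsible for the extra fixed point $t^{2}$ in characteristic two.
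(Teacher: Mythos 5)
Your proof is correct, and it shares the paper's overall skeleton: first force a fixed point to be a power of $t$, then decide which powers survive. The differences lie in the tools. For the reduction, the paper invokes Proposition~\ref{prop:basic}~(1),~(2) and Proposition~\ref{prop:comp1}, which amounts to the same squeeze you carry out, $q^{\deg f}\le\delta(f)=\delta(S(f))\le q^{\deg f}$; your use of Proposition~\ref{prop:val-range} to dispose of nonzero constants is an equally valid substitute for the paper's appeal to Proposition~\ref{prop:comp1}. The genuine divergence is in classifying the powers $t^{n}$: the paper does this bare-handed from Definition~\ref{def:factorial}, observing that $t^{e}\mid t^{e-1}!$ for $e>2$ (so $S(t^{e})\le t^{e-1}<t^{e}$) and that $t^{2}\mid (a_{2}t)!$ when $q>2$, whereas you invoke the heavier Proposition~\ref{prop:delta-Kf} for $n\ge 3$ and Proposition~\ref{prop:comp2} for $n=2$. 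Both routes are sound and non-circular, since Propositions~\ref{prop:delta-Kf}, \ref{prop:comp2} and \ref{prop:val-range} are established in the paper before the fixed-point discussion and do not depend on it. The paper's check is more elementary and self-contained; yours buys economy of thought, reusing the general bounds that govern the dynamics of $S$ (notably $\delta(S(f))\le\delta(f)/q$ for reducible $f$) so that no explicit factorial divisibility needs to be verified beyond the $n=2$ case.
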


\begin{proof}
If $ f $ is a fixed point, then $S(f)= f$,
and by Proposition~\ref{prop:basic} (1), (2) and Proposition~\ref{prop:comp1},
we must have $f=t^{e},e\in\N^*$.
So, by definition we obtain the desired result.
Indeed, by the definition of factorial (Definition~\ref{def:factorial}),
we have that $t^e \mid t^{e-1}!$ if $e>2$;
and  if $q>2$, then $t^2 \mid (a_2 t)!$.
\end{proof}

We remark that in the integer case all the prime numbers are fixed points of the Smarandache function.

For any integer $n \ge 1$, let $S^{(n)}$ be the $n$-th iteration of $S$.
It is easy to see that for any $f \in \A$ with $\deg f \ge 1$
there exists some integer $n$ such that $S^{(n)}(f)$ is a fixed point of $S$.
We now estimate the number of iterations, which can be viewed as the distance to fixed points.

\begin{proposition}  \label{prop:dist}
For any $f \in \A$ with $\deg f \ge 1$, there exists a positive integer $n \le 1 + \deg f$
such that $S^{(n)}(f)$ is a fixed point of $S$.
\end{proposition}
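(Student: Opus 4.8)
The plan is to track how the degree of the iterates evolves, showing that it strictly decreases at essentially every step until a fixed point is reached. Write $f_n = S^{(n)}(f)$ with $f_0 = f$. By Proposition~\ref{prop:val-range} we have $f_n \in t\A$ for every $n \ge 1$, and combining Proposition~\ref{prop:basic}(2) with $\delta(t^{\deg g}) = q^{\deg g}$ gives $\deg S(g) \le \deg g$ for every non-zero $g$; moreover $\deg S(g) = \deg g$ holds \emph{exactly} when $S(g) = t^{\deg g}$ (since $\deg S(g)=\deg g=d$ forces $\delta(S(g)) \ge q^d$, while Proposition~\ref{prop:basic}(2) forces $\delta(S(g)) \le q^d$, so $\delta(S(g))=q^d$). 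Thus $\deg f_1 \ge \deg f_2 \ge \cdots$ is non-increasing, and it suffices to control the steps at which the degree fails to drop.

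The key step is to show that a strict drop occurs at every iterate $f_n$ with $n \ge 1$ and $\deg f_n \ge 2$, with the sole exception of $q=2$ and $f_n = t^2$ (which is already a fixed point). Suppose $\deg S(f_n) = \deg f_n \ge 2$; then $S(f_n) = t^{\deg f_n}$. By Proposition~\ref{prop:basic}(1) we may assume $f_n$ is monic and factor it into prime powers. Using Proposition~\ref{prop:comp1}, which writes $S$ as the maximum over the prime-power factors, and the bound $\delta(S(P_i^{e_i})) \le q^{e_i \deg P_i}$, the presence of two or more distinct prime-power factors would give $\delta(S(f_n)) \le q^{\deg f_n - 1} < q^{\deg f_n}$, a contradiction; hence $f_n = P^e$. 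Now $S(P^e) = t^{\deg P^e}$, so by the characterization proved above (for $\deg g \ge 1$ with $q \ge 3$ or $g \ne b(t+c)^2$, one has $S(g)=t^{\deg g}$ iff $g$ is irreducible) either $P^e$ is irreducible, forcing $f_n$ to be an associate of $t$ of degree $1$, contradicting $\deg f_n \ge 2$, or $q=2$ and $P^e = (t+c)^2$; since $f_n \in t\A$ the latter leaves only $f_n = t^2$. This proves the claim.

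It remains to treat the degree-one tail: any $g \in t\A$ with $\deg g = 1$ equals $c\,t$ for some $c \in \F_q^*$, so $S(g) = S(t) = t$ by Proposition~\ref{prop:basic}(1), a fixed point. Combining everything, starting from $f_1 \in t\A$ with $\deg f_1 \le \deg f$, the degrees strictly decrease until a fixed point appears. When $q \ge 3$ the chain descends to some $c\,t$ after at most $\deg f - 1$ further steps (reaching an iterate of degree $1$ by index at most $\deg f$), and then needs at most one more step to reach $t$, so a fixed point occurs at some index $n \le 1+\deg f$; when $q=2$ the chain reaches $t$ or $t^2$ even sooner, again within $1+\deg f$ steps.

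I expect the main obstacle to be the key step, namely ruling out a degree-preserving iterate of degree $\ge 2$. Everything there hinges on converting $\deg S(f_n) = \deg f_n$ into $S(f_n)=t^{\deg f_n}$, reducing to a single prime power through Proposition~\ref{prop:comp1}, and then invoking the irreducibility criterion, whose exceptional case $b(t+c)^2$ at $q=2$ is precisely what creates the second fixed point $t^2$ and must be tracked with care. Once this is in place, the counting that yields the bound $1+\deg f$ is routine.
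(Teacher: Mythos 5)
Your proof is correct, but it follows a genuinely different route from the paper's. The paper's proof runs on Proposition~\ref{prop:delta-Kf}: since every iterate $S^{(n)}(f)$ ($n \ge 1$) lies in $t\A$ and hence is reducible once its degree is at least $2$ (and automatically avoids the exceptional form $b(t+c)^2$ once its degree is at least $3$), the quantity $\delta$ shrinks by a factor of at least $q$ at each step, $\delta(S^{(j+1)}(f)) \le \delta(S^{(j)}(f))/q$; combining this geometric decay with $\delta(S(f)) \le q^{\deg f}$ shows the iterates reach degree at most $2$ within $\deg f - 1$ steps, after which two further steps land on a fixed point. You instead never touch Proposition~\ref{prop:delta-Kf}: your workhorse is the equality case of $S(g) \le t^{\deg g}$ (degree is preserved only if $S(g) = t^{\deg g}$), which you combine with Proposition~\ref{prop:comp1} and, crucially, the paper's irreducibility criterion ($S(g) = t^{\deg g}$ iff $g$ is irreducible, outside the $q=2$, $b(t+c)^2$ exception) to show that the \emph{degree} strictly drops at every iterate in $t\A$ of degree at least $2$, the only exception being $t^2$ at $q=2$, which is itself a fixed point. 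Both arguments share the same pivot --- iterates are divisible by $t$, so an irreducible iterate of degree $\ge 2$ is impossible --- but they buy different things: the paper's version exhibits the quantitative $\delta$-decay (one of its highlighted results) along the orbit, while yours is leaner for the bound at hand, since a drop of one in degree per step gives the count $1 + \deg f$ immediately without converting between $\delta$ and degree. Two cosmetic remarks: your reduction to a single prime power $P^e$ via Proposition~\ref{prop:comp1} is redundant, since the irreducibility criterion applies directly to $f_n$; and you implicitly use that $S(g) \ne 0$ for non-constant $g$ (so that $\deg f_1 \ge 1$), which deserves a word, though it is immediate since $g$ divides no constant factorial.
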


\begin{proof}
We first note that
by Proposition~\ref{prop:val-range}, for any polynomial $f \in \A$ with $\deg f \ge 1$,
 we have $S(f) \in t\A$, and so,
if $\deg S(f) \ge 2$, then $S(f)$ must be a reducible polynomial.

Now, given $f \in \A$ with $\deg f \ge 1$, if $\deg S(f) \ge 3$,
then $S(f)$ satisfies the condition in Proposition~\ref{prop:delta-Kf}, and so
$$
\delta(S^{(2)}(f)) \le \frac{\delta(S(f))}{q}.
$$
If again $\deg S^{(2)}(f) \ge 3$, we have
$$
\delta(S^{(3)}(f)) \le \frac{\delta(S^{(2)}(f))}{q} \le  \frac{\delta(S(f))}{q^2}.
$$
This process stops when we reach $\deg S^{(j)}(f) \le 2$ for some integer $j$.
So, this integer $j$ satisfies
$$
\delta(S^{(j)}(f)) < q^3.
$$
This automatically holds if
$$
\frac{\delta(S(f))}{q^{j-1}} < q^3,
$$
which, together with $\delta(S(f)) \le q^{\deg f}$ by Proposition~\ref{prop:basic} (2), is implied in
$$
q^{\deg f - j +1} < q^3.
$$
Thus $j \ge \deg f - 1$.

If $\deg S^{(j)}(f) = 2$, since $S^{(j)}(f)$ is reducible, we have that $S^{(j+2)}(f)$ must be a fixed point of $S$.

Then, we always have that $S^{(j+2)}(f)$ is a fixed point of $S$.
Hence, for $n = 1 + \deg f$, $S^{(n)}(f)$ is a fixed point of $S$.
\end{proof}

We remark that the upper bound in Proposition~\ref{prop:dist} is optimal.
Because when $q\ge 3$,
$$
S^{(3)}(f) = S^{(2)}(t^2) = S(a_2 t) =t
$$
for any irreducible polynomial $f \in \A$ of degree $2$.

\section{Analogue of Erd{\H o}s's problem}
\label{sec:Erdos}

In this section, we establish an analogue of Erd{\H o}s's problem.

For any non-constant polynomial $f \in \A$, let $\cP(f)$ be the maximal monic irreducible factor of $f$.
Following Erd{\H o}s's problem, for any integer $n \ge 1$, we define the subset of $\A$:
$$
\cT(n) = \{\textrm{monic }f \in \A:\, \deg f=n, S(f) \ne t^{\deg \cP(f)} \}.
$$
One should note that $S(\cP(f)) = t^{\deg \cP(f)}$.

Using the strategy in \cite{Akbik}, which is in fact a classical approach
by considering the number of distinct prime factors and the maximal prime factor,
we establish the following analogue of Erd{\H o}s's problem.

\begin{theorem}  \label{thm:Erdos}
For the sets $\cT(n)$, we have
\begin{equation*}
\begin{split}
|\cT(n)| <
&  \left\{\begin{array}{ll}
2^{n+2} \exp(-\sqrt{n}/3),  & \textrm{if $q=2$ and $n \ge 3249$,}\\
4q^n \exp(-\sqrt{n}/2),  & \textrm{if $q \ge 3$ and $n \ge \max\{13000, \log q\}$.}
\end{array}
\right.
\end{split}
\end{equation*}

\end{theorem}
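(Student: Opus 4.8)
The plan is to exhibit a covering of $\cT(n)$ by the three families $\cT_1(n,r),\cT_2(n,r),\cT_3(n,r)$ of Lemmas~\ref{lem:S1}--\ref{lem:S3} together with one extra ``smooth'' family, and to optimize the free parameter $r$. First I would record the membership criterion for $\cT(n)$. By Proposition~\ref{prop:comp1} we have $S(f)=\max_i S(P_i^{e_i})$ over the prime power factors $P_i^{e_i}$ of $f$, and $S(\cP(f))=t^{\deg\cP(f)}$; hence, writing $d=\deg\cP(f)$, a polynomial $f$ lies in $\cT(n)$ if and only if some factor $P^e$ of $f$ satisfies $S(P^e)>t^{d}$. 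To quantify this I would use the exact identity $\delta(S(P^e))=(q^{s}-1)e+\sum_l c_l$ (with $s=\deg P$) extracted from the proof of Proposition~\ref{prop:comp2}, together with the crude digit bound $\sum_l c_l\le q^{s}(1+\log_q e)$. Since $\delta(t^{d})=q^{d}$, this gives $\delta(S(P^e))\le q^{s}(e+1+\log_q e)$, so a bad factor is forced to have $e+1+\log_q e>q^{\,d-s}$, and therefore $e>q^{\,d-s}/3$. In words: any factor that violates the equality $S(f)=t^{d}$ must have multiplicity large relative to the gap $d-s\ge 0$.

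With this in hand I would show that, after discarding $\cT_2(n,r)$ and $\cT_3(n,r)$, the remaining members of $\cT(n)$ are smooth. Suppose $f\in\cT(n)$ with a bad factor $P^e$, $s=\deg P$. A bad factor needs $e\ge 2$; so if $s>D$ then $f\in\cT_2(n,r)$, while if $s\le D$ and $f\notin\cT_3(n,r)$ then $e<D$, and the inequality $e>q^{\,d-s}/3$ yields $q^{\,d-s}<3D$, i.e. $d<D+\log_q(3D)=:D'$. Consequently $\cP(f)$, hence every irreducible factor of $f$, has degree below $D'$, so $f$ is $D'$-smooth. Separating off those $f$ with $\omega(f)>B$ (which lie in $\cT_1(n,r)$) then gives the covering $\cT(n)\subseteq\cT_1(n,r)\cup\cT_2(n,r)\cup\cT_3(n,r)\cup\cS$, where $\cS$ is the set of $D'$-smooth monic $f$ of degree $n$ with $\omega(f)\le B$.

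To bound $\cS$ I would count by radicals: for $f\in\cS$ the product of its distinct irreducible factors is a monic polynomial of degree at most $BD'$, of which there are fewer than $2q^{BD'}$; and once this radical is fixed, the exponents $e_i\ge 1$ subject to $\sum_i e_i\deg P_i=n$ number at most $n^{B}$. Hence $|\cS|\le 2q^{BD'}n^{B}$. Now I would choose $r$ so that $r\log\log q^{n}=\tfrac12\sqrt n$ when $q\ge 3$ (using the reduced thresholds $B=2r\log\log q^n$, $D=r\log\log q^n$ permitted there). Then $(\log q^{n})^{r}=\exp(\sqrt n/2)$, so each of $|\cT_1|,|\cT_2|,|\cT_3|<q^{n}/(\log q^{n})^{r}\le q^{n}\exp(-\sqrt n/2)$ by Lemmas~\ref{lem:S1}--\ref{lem:S3}; meanwhile $BD'\approx\tfrac12 n$, so $2q^{BD'}n^{B}\le q^{n}\exp(-\sqrt n/2)$ for $n$ large. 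Adding the four estimates yields $|\cT(n)|<4q^{n}\exp(-\sqrt n/2)$. For $q=2$ the reduced thresholds are unavailable, so $B=3r\log\log q^n$ and $D=2r\log\log q^n$; balancing $BD'<n$ now forces $r\log\log q^n=\tfrac13\sqrt n$ and produces the weaker saving $\exp(-\sqrt n/3)$, giving $|\cT(n)|<4\cdot 2^{n}\exp(-\sqrt n/3)=2^{n+2}\exp(-\sqrt n/3)$.

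The main obstacle is the simultaneous calibration in the final step. The parameter $r$ must be large enough that all three lemma-bounds beat the target $\exp(-\sqrt n/2)$ (respectively $\exp(-\sqrt n/3)$), yet small enough that $BD'<n$ keeps the smooth count $q^{BD'}n^{B}$ below $q^{n}\exp(-\sqrt n/2)$; and the chosen $r$ must still satisfy the side hypotheses ($D\ge 8$, $r\ge 3$, etc.) needed to invoke the reduced thresholds of Lemmas~\ref{lem:S1}--\ref{lem:S3}. Pinning down a single $r$ meeting every constraint is precisely what produces the explicit cutoffs $n\ge 13000$ (for $q\ge 3$) and $n\ge 3249$ (for $q=2$); by contrast, the degree estimate for $S(P^e)$ and the radical count are routine once the identity from Proposition~\ref{prop:comp2} is available.
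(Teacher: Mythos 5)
Your proposal is correct, and its skeleton is the same as the paper's: cover $\cT(n)$ by $\cT_1,\cT_2,\cT_3$ plus a residual set of smooth polynomials with few irreducible factors, then calibrate $r\log\log q^n=\sqrt n/3$ (resp.\ $\sqrt n/2$ for $q\ge 3$). Two steps are executed differently, and the comparison is worth recording. First, your smoothness bound $\deg\cP(f)<D+\log_q(3D)$ is the paper's Lemma~\ref{lem:S4} (which gets $D+\log_q D$): the paper derives it from $f\nmid t^{\deg\cP(f)}!$ via the valuation formula of Lemma~\ref{lem:val}, while you extract it from the identity $\delta(S(P^e))=(q^s-1)e+\sum_l c_l$ in the proof of Proposition~\ref{prop:comp2}; both derivations are sound. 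Second, and more consequentially, the paper bounds the residual set by $(2Dq^D)^B$ --- each of the at most $B$ prime-power divisors is drawn from a pool of size $2Dq^D$, so the exponent of a small-degree prime costs a factor $D$, not $n$ --- whereas your radical-plus-exponents count $2q^{BD'}n^B$ pays a factor $n$ per exponent and is larger by roughly $(3n/2)^B$. For $q\ge 3$ and $n\ge 13000$ this loss is harmless. For $q=2$ at the stated cutoff it nearly costs you the theorem: with $n=3249$, $u=\sqrt n=57$, the inequality $2\cdot 2^{BD'}n^B\le 2^n\exp(-\sqrt n/3)$ that you wave through as ``for $n$ large'' reduces to $u/3\ge 1/u+1+1/(3\ln 2)+3\log_2 u$, i.e.\ $19\ge 18.997$, which holds by a margin of about $0.003$ (and the margin is increasing in $n$, so the theorem does follow for all $n\ge 3249$). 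So your argument proves the statement exactly as claimed, but the final numerical verification in the $q=2$ case is razor-thin and must genuinely be carried out rather than asserted; the paper's pool-based count of the residual set is precisely what buys it a comfortable amount of slack there.
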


Theorem~\ref{thm:Erdos} implies that for almost all polynomials $f \in \A$, $S(f)=t^d$,
where $d$ is the maximal degree of the irreducible factors of $f$.

Recall the sets $\cT_1(n,r), \cT_2(n,r), \cT_3(n,r)$ defined in Section~\ref{sec:count}.
To prove Theorem~\ref{thm:Erdos}, we need one more preliminary result.

\begin{lemma} \label{lem:S4}
For any integer $n \ge 1$ and any real $r \ge 1$, define
$$
\cT_4(n,r) = \cT(n) \setminus (\cT_1(n,r) \cup \cT_2(n,r) \cup \cT_3(n,r)).
$$
Then, for any $f \in \cT_4(n,r)$ we have
$$
\deg \cP(f) < D + \frac{\log D}{\log q}.
$$
\end{lemma}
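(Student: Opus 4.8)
The plan is to locate, inside the factorization of $f$, the single prime-power component that forces $S(f)$ to exceed $t^{\deg \cP(f)}$, and then to bound $\deg \cP(f)$ against the degree of the Smarandache value of that component. First I would dispose of the trivial case: if $\deg \cP(f) \le D$ there is nothing to prove, since then $\deg \cP(f) \le D < D + \frac{\log D}{\log q}$. So I may assume $\deg \cP(f) > D$. As $f \in \cT_4(n,r) \subseteq \cT(n)$, we have $S(f) \ne t^{\deg \cP(f)}$. Because $\deg \cP(f) > D$ and $f \notin \cT_2(n,r)$, the factor $\cP(f)$ cannot divide $f$ to a power $\ge 2$, so it divides $f$ exactly once; hence $S(\cP(f)) = t^{\deg \cP(f)}$ is one of the terms appearing in the maximum of Proposition~\ref{prop:comp1}, giving $S(f) \ge t^{\deg \cP(f)}$ and therefore $S(f) > t^{\deg \cP(f)}$.

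Next I would pin down the component responsible for this strict inequality. By Proposition~\ref{prop:comp1}, $S(f)$ is the maximum of $S(P_i^{e_i})$ over the prime-power components $P_i^{e_i}$ of $f$, so there is a monic irreducible $Q$ whose exact power in $f$ is $Q^{e}$ and which satisfies $S(Q^{e}) > t^{\deg \cP(f)}$, i.e. $\delta(S(Q^{e})) > q^{\deg \cP(f)}$. I claim $\deg Q \le D$: otherwise $f \notin \cT_2(n,r)$ forces $e = 1$, whence $S(Q^{e}) = S(Q) = t^{\deg Q}$ by Proposition~\ref{prop:basic}(3), and the maximality of $\cP(f)$ gives $\deg Q \le \deg \cP(f)$, contradicting $S(Q) > t^{\deg \cP(f)}$. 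With $\deg Q \le D$ secured, the exclusion $f \notin \cT_3(n,r)$ immediately yields $e < D$. Note that only the $\cT_2$ and $\cT_3$ exclusions enter here.

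The heart of the argument is a sharp estimate for $\deg S(Q^{e})$. Writing $d = \deg Q$ and $b_{j} = \frac{q^{dj}-1}{q^{d}-1}$, the proof of Proposition~\ref{prop:comp2} gives the exact identity $\delta(S(Q^{e})) = (q^{d}-1)e + (c_{1} + \cdots + c_{k})$, where $e = c_{1} b_{j_1} + \cdots + c_{k} b_{j_k}$ with each $b_{j_i} \ge 1$. Since $b_{j_i} \ge 1$ forces $c_{1} + \cdots + c_{k} \le e$, this gives $\delta(S(Q^{e})) \le q^{d} e$, and hence, using $\deg g = \lfloor \log_q \delta(g) \rfloor$ for any non-zero $g$, we get $\deg S(Q^{e}) \le d + \log_q e$. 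Finally $\delta(S(Q^{e})) > q^{\deg \cP(f)}$ gives $\deg \cP(f) \le \deg S(Q^{e})$, and combining with $d \le D$ and $e < D$ I would conclude
\[
\deg \cP(f) \le \deg S(Q^{e}) \le d + \frac{\log e}{\log q} < D + \frac{\log D}{\log q},
\]
as desired.

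The step I expect to require the most care is the degree estimate $\deg S(Q^{e}) \le d + \log_q e$. The naive route of bounding $\deg S(Q^{e})$ through the leading index $j_1$ (via $e \ge b_{j_1} > q^{d(j_1-1)}$ and $\delta(S(Q^e)) \le q^{d(j_1+1)}$) only yields roughly $2d + \log_q e$, which is too weak to reach $D + \frac{\log D}{\log q}$. The decisive observation is the identity $\delta(S(Q^{e})) = (q^{d}-1)e + \sum_i c_i$ together with the elementary bound $\sum_i c_i \le e$, which collapses the estimate to the essentially optimal $d + \log_q e$. Once this estimate is in place, correctly locating the offending component and verifying that it has degree at most $D$ — so that both the $\cT_2$ and $\cT_3$ exclusions apply — is routine.
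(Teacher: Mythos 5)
Your proof is correct, but it takes a genuinely different route from the paper's. The paper argues on the divisibility side: since $S(f) \ne t^{\deg \cP(f)}$ forces $f \nmid t^{\deg\cP(f)}!$, there is a monic irreducible $P$ with $v_P(f) > v_P(t^{\deg\cP(f)}!)$; Lemma~\ref{lem:val} gives $v_P(t^{\deg\cP(f)}!) \ge q^{\deg\cP(f)-\deg P}$, and the $\cT_2$/$\cT_3$ exclusions give $\deg P \le D$ and $v_P(f) < D$, so $q^{\deg\cP(f)-D} < D$ --- a single uniform chain, with no case split, using only the stated Lemma~\ref{lem:val}. You argue instead on the value side: via Proposition~\ref{prop:comp1} you locate the prime-power component $Q^e$ realizing $S(f)$, bound $d = \deg Q \le D$ and $e < D$ by the same two exclusions, and then control $\delta(S(Q^e))$ through the identity $\delta(S(Q^e)) = (q^d-1)e + \sum_i c_i$ (which does appear, exactly as you cite it, in the proof of Proposition~\ref{prop:comp2}) together with $\sum_i c_i \le e$, getting $q^{\deg\cP(f)} < \delta(S(Q^e)) \le q^d e < q^D D$. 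Both routes terminate at the same inequality $q^{\deg\cP(f)} < Dq^D$. Yours is structurally more informative --- it exhibits the offending component and shows $\deg\cP(f) \le \deg S(Q^e)$ --- and your observation that the cruder estimate via $j_1$ only yields roughly $2D + \log_q D$ is accurate; the price is that your key identity lives inside the proof of Proposition~\ref{prop:comp2} rather than in a stated result, whereas the paper leans directly on Lemma~\ref{lem:val}. One small caveat: your dismissal of the case $\deg\cP(f) \le D$ tacitly assumes $D > 1$ (so that $\log D > 0$); at the degenerate value $D = 1$, where $\deg\cP(f) \le D$ forces $\deg\cP(f) = 1$, the inequality $D < D + \log D/\log q$ fails, and one should instead observe that any such $f$ has a linear factor with exponent $\ge 1 = D$, hence lies in $\cT_3(n,r)$ and not in $\cT_4(n,r)$. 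This is invisible in the regime where the lemma is actually applied ($r\log\log q^n \ge 19$), and the paper's contradiction-style argument never meets it, but a referee could ask you to patch it.
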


\begin{proof}
For any $f \in \cT_4(n,r)$, we have $S(f) \ne t^{\deg \cP(f)}$, which implies that  $f \nmid t^{\deg \cP(f)}!$.
So, there exists a monic irreducible polynomial $P$
such that $P \mid f$ and $v_P(f) > v_P(t^{\deg \cP(f)}!)$.
Note that $v_P(t^{\deg \cP(f)}!) \ge 1$ by definition. So, we have $v_P(f) \ge 2$.
Then, in view of $f \not\in \cT_2(n,r)$ and $f \not\in \cT_3(n,r)$, we must have
$\deg P \le D$ and $v_P(f) < D$.

Hence, using Lemma~\ref{lem:val} we obtain
\begin{align*}
D > v_P(f) > v_P(t^{\deg \cP(f)}!) \ge \frac{q^{\deg \cP(f)}}{q^{\deg P}}
\ge \frac{q^{\deg \cP(f)}}{q^D},
\end{align*}
which gives the desired result.
\end{proof}

We are now ready to prove Theorem~\ref{thm:Erdos}.

\begin{proof}[Proof of Theorem~\ref{thm:Erdos}]
For any $f \in \cT_4(n,r)$, we have $f \not\in \cT_1(n,r) \cup \cT_2(n,r) \cup \cT_3(n,r)$.
So, $\omega(f) \le B = 3r \log\log q^n$, and also,
if $P^e, e \ge 1,$ is any positive power of a monic irreducible polynomial $P$ such that $P^e \mid f$,
then we only have two cases:
\begin{itemize}
\item[(i)] $\deg P \le D, e < D$,

 \item[(ii)] $\deg P > D, e = 1$,
\end{itemize}
where $D=2r\log\log q^n$.
Case (i) yields at most $\lfloor Dq^D \rfloor$ positive powers of monic irreducible polynomials (using Lemma~\ref{lem:count-irre}).
For Case (ii), since $\deg P \le \deg \cP(f) < D + \log D / \log q$ by Lemma~\ref{lem:S4},
it also gives at most $\lfloor Dq^D \rfloor$ positive powers of monic irreducible polynomials.
Hence, the number of possible powers of monic irreducible polynomials which divides an $f \in \cT_4(n,r)$
is not greater than $2Dq^D$.
However, such an $f$ is the product of at most $B = 3r \log\log q^n$ distinct powers of monic irreducible polynomials.
Hence, we have
\begin{equation} \label{eq:S4}
\begin{split}
|\cT_4(n,r)|
& \le (2Dq^D)^{B} \\
& = (4r \log\log q^n)^{3r \log\log q^n} \cdot q^{6 (r\log\log q^n)^2} \\
& \le q^{7(r\log\log q^n)^2}
\end{split}
\end{equation}
when $r \log\log q^n \ge 19$.
Then, assuming moreover $n \ge 3$ and $r \ge 2$ and using Lemmas~\ref{lem:S1}, \ref{lem:S2} and \ref{lem:S3}, we obtain
\begin{equation} \label{eq:S}
\begin{split}
|\cT(n)| & \le |\cT_1(n,r)| + |\cT_2(n,r)| + |\cT_3(n,r)| + |\cT_4(n,r)| \\
& < \frac{3q^n}{(\log q^n)^r} + q^{7(r\log\log q^n)^2}.
\end{split}
\end{equation}
Now, choosing
$$
r =  \frac{\sqrt{n}}{3\log\log q^n},
$$
we obtain
$$
|\cT(n)| < 4q^n \exp(-\sqrt{n}/3)
$$
when $n \ge 3249$ and $n \ge (6\log\log q^n)^2$.
Here the condition on $n$ comes from $\sqrt{n}/3 = r \log\log q^n \ge 19$ and $r \ge 2$.

If $q = 2$, we have
$$
|\cT(n)| < 2^{n+2} \exp(-\sqrt{n}/3)
$$
when $n \ge 3249$.

We now assume $q \ge 3$.
In this case, assuming $n \ge 4, r \ge 3$ and $r\log \log q^n \ge 19$ and using Lemmas~\ref{lem:S1}, \ref{lem:S2} and \ref{lem:S3},
we can choose $B = 2r \log \log q^n$ and $D = r\log \log q^n$,
and then \eqref{eq:S4} becomes
\begin{equation*}
\begin{split}
|\cT_4(n,r)|
& \le (2Dq^D)^{B} \\
& = (2r \log\log q^n)^{2r \log\log q^n} \cdot q^{2 (r\log\log q^n)^2} \\
& \le q^{3 (r\log\log q^n)^2}.
\end{split}
\end{equation*}
So, \eqref{eq:S} becomes
$$
|\cT(n)|  < \frac{3q^n}{(\log q^n)^r} + q^{3(r\log\log q^n)^2}.
$$
Now, choosing
$$
r =  \frac{\sqrt{n}}{2\log\log q^n},
$$
we obtain
$$
|\cT(n)| < 4q^n \exp(-\sqrt{n}/2)
$$
when $n \ge 1444$ and $n \ge (6\log\log q^n)^2$.
Here the condition on $n$ comes from $\sqrt{n}/2 = r\log \log q^n \ge 19$ and $r \ge 3$.
Finally, it is easy to see that if
$$
n \ge \max\{13000, \log q\},
$$
then the condition $n \ge (6\log\log q^n)^2$ holds.
This completes the proof.
\end{proof}

\section*{Acknowledgement}
The authors thank the referee for careful reading of the paper and valuable comments.
They also thank George Martin for valuable comments.
The first author was supported by the National Science Foundation of China Grant No. 11526119
and the Scientific Research Foundation of Qufu Normal University No. BSQD20130139,
and the second author was supported by the Australian Research Council Grant DE190100888.

\end{document}